\documentclass[a4paper,12pt]{amsart}
\usepackage{amsmath,amsthm,amssymb}
\usepackage{mathrsfs}
\usepackage{mathtools}
\usepackage{ifthen}
\usepackage{booktabs}
\usepackage{caption}
\usepackage{placeins}
\usepackage{makecell}
\usepackage{float}
\usepackage{cite}
\usepackage{mathrsfs}
\nonstopmode \numberwithin{equation}{section}
\newtheorem{thm}{Theorem}%[section]

\newtheorem{cor}{Corollary}%[section]
\newtheorem{lem}{Lemma}%[section]
\newtheorem{conj}{Conjecture}

\theoremstyle{definition}
\newtheorem{defn}{Definition}[section]

\newtheorem{prob}[equation]{Problem}

\newenvironment{rem}{%
\bigskip
\noindent \textsl{{\sl Remark. }}}{\bigskip}
\newenvironment{rems}{%
\bigskip
\noindent \textsl{{\sl Remarks. }}}{\bigskip}

\newcounter {own}
\def\theown {\thesection       .\arabic{own}}

\newenvironment{pf}[1][]{%
 \vskip 3mm
 \noindent
 \ifthenelse{\equal{#1}{}}%
  {{\slshape Proof. }}%
  {{\slshape #1.} }%
 }%
{\qed\bigskip}

\newcounter{alphabet}

\newcommand{\A}{{\mathcal A}}

\newcommand{\es}{{\mathcal S}}

\newcommand{\ID}{{\mathbb D}}

\newcommand{\IC}{{\mathbb C}}

\newcommand{\D}{{\mathbb D}}
\def\be{\begin{equation}}
\def\ee{\end{equation}}

\newcommand{\bee}{\begin{enumerate}}
\newcommand{\eee}{\end{enumerate}}

\newcommand{\blem}{\begin{lem}}
\newcommand{\elem}{\end{lem}}
\newcommand{\bthm}{\begin{thm}}
\newcommand{\ethm}{\end{thm}}
\newcommand{\bcor}{\begin{cor}}
\newcommand{\ecor}{\end{cor}}
\newcommand{\beg}{\begin{examp}}
\newcommand{\eeg}{\end{examp}}
\newcommand{\begs}{\begin{examples}}
\newcommand{\eegs}{\end{examples}}
\newcommand{\bdefe}{\begin{defn}}
\newcommand{\edefe}{\end{defn}}
\newcommand{\bprob}{\begin{prob}}
\newcommand{\eprob}{\end{prob}}
\newcommand{\bei}{\begin{itemize}}
\newcommand{\eei}{\end{itemize}}

\newcommand{\bcon}{\begin{conj}}
\newcommand{\econ}{\end{conj}}
\newcommand{\bcons}{\begin{conjs}}
\newcommand{\econs}{\end{conjs}}
\newcommand{\bprop}{\begin{propo}}
\newcommand{\eprop}{\end{propo}}
\newcommand{\br}{\begin{rem}}
\newcommand{\er}{\end{rem}}
\newcommand{\brs}{\begin{rems}}
\newcommand{\ers}{\end{rems}}
\newcommand{\bo}{\begin{obser}}
\newcommand{\eo}{\end{obser}}
\newcommand{\bos}{\begin{obsers}}
\newcommand{\eos}{\end{obsers}}
\newcommand{\bpf}{\begin{pf}}
\newcommand{\epf}{\end{pf}}
\newcommand{\ba}{\begin{array}}
\newcommand{\ea}{\end{array}}
\newcommand{\beq}{\begin{eqnarray}}
\newcommand{\beqq}{\begin{eqnarray*}}
\newcommand{\eeq}{\end{eqnarray}}
\newcommand{\eeqq}{\end{eqnarray*}}

\newcounter{minutes}
\divide\time by 60
\newcounter{hours}
\multiply\time by 60 \addtocounter{minutes}{-\time}
\begin{document}
\title{Radius of starlikeness of $\mathcal{S}\ast \mathcal {S}t(\alpha)$ }
\begin{center}
{\tiny \texttt{FILE:~\jobname .tex,
        printed: \number\year-\number\month-\number\day,
        \thehours.\ifnum\theminutes<10{0}\fi\theminutes}
}
\end{center}
\author{Bappaditya Bhowmik${}^{~\mathbf{*}}$}
\address{Bappaditya Bhowmik, Department of Mathematics,
Indian Institute of Technology Kharagpur, Kharagpur - 721302, India.}
\email{bappaditya@maths.iitkgp.ac.in}
\author{Souvik Biswas}
\address{Souvik Biswas, Department of Mathematics, Indian Institute of Technology Kharagpur, Kharagpur - 721302, India.}
\email{souvikbiswas158@gmail.com}

\subjclass[2020]{30C45, 30C55} \keywords{Convolution, Starlike functions, Radius of starlikeness}

\begin{abstract}
   Let $\mathcal{S}$ be the set of all analytic univalent functions $f$ defined in the open unit disc $\D$, with $f(0)=0=f'(0)-1$. For $\alpha\in[0,1)$, let $\mathcal{S}t(\alpha)$ be the set of all starlike functions of order $\alpha$ in $\mathcal{S}$. In this article, by applying duality technique we obtain the radius of a disc that is mapped onto a starlike domain with respect to the origin by the functions in the set $\mathcal{S}\ast \mathcal {S}t(\alpha):=\{f\ast g :f\in\mathcal{S},~g\in\mathcal{S}t(\alpha)\}$. Here, `$\ast$' denotes the convolution (or Hadamard product) of two analytic functions in $\mathbb{D}$.
\end{abstract}

\maketitle
\pagestyle{myheadings}
\markboth{B. Bhowmik and S. Biswas}{}
\maketitle
\pagestyle{myheadings}
\markboth{B. Bhowmik and S. Biswas }{Radius of starlikeness of $\mathcal{S}\ast \mathcal {S}t(\alpha)$}

\bigskip
\bigskip

\section{Introduction and statement of the main result}\label{P5sec1}
 Let $\mathbb{C}$ be the whole complex plane and $\mathbb{D}:=\{z\in\mathbb{C}:|z|<1\}$ be the open unit disc. We denote the unit circle by $\partial \D:=\{z\in \mathbb{C}:|z|=1\}$. Let $\mathcal{A}$ be the set of all analytic functions $f$ defined in $\mathbb{D}$ with the normalization $f(0)=0=f'(0)-1$. Let $\mathcal{S}$ denote the set of all univalent functions in $\mathcal{A}$. Over the years, various subsets of $\mathcal{S}$ characterized by specific geometric properties have been investigated. In \cite{rob}, Robertson introduced the sets $\mathcal{C}(\alpha)$ and $\mathcal{S}t(\alpha)$ of convex and starlike functions of order $\alpha<1$, respectively, defined as follows.
\beqq
\mathcal{C}(\alpha)&:=&\{f\in \mathcal{A} : {\rm{Re}}\,\left(1+\frac{zf''(z)}{f'(z)}\right)>\alpha, ~ z\in\D\},\\
\mathcal{S}t(\alpha)&:=&\{f\in \mathcal{A} : {\rm{Re}}\,\left(\frac{zf'(z)}{f(z)}\right)>\alpha, ~ z\in \D\}.
\eeqq
It is well-known that every function in $\mathcal{C}(\alpha)$ and $\mathcal{S}t(\alpha)$ is univalent for $\alpha\in[0,1)$. However, if $\alpha<0$ then functions in these sets need not be univalent. We denote the set of all convex functions by $\mathcal{C}:=\mathcal{C}(0)$, which consists of all functions $f\in\mathcal{S}$ such that $f$ maps $\D$ conformally onto a convex domain. Similarly, the set of starlike functions is denoted by $\mathcal{S}t:=\mathcal{S}t(0)$, which consists of all functions $f \in \mathcal{S}$ such that $f$ maps $\D$ conformally onto a starlike domain with respect to the origin. In $1933$, E. Strohh\"acker (see \cite{str}) proved that $\mathcal{C}\subsetneq \mathcal{S}t(1/2)$ and the constant $1/2$ cannot be improved. In other words, $\mathcal{C}\subsetneq \mathcal{S}t(\alpha)$ for $\alpha\in[0,1/2]$. The radius of convexity (or starlikeness) of a subset $\A_1$ of $\A$ is the largest number $r\in (0,1]$ such that each function $f\in \A_1$ is convex (or starlike) in $\ID_r=\{z\in \IC: |z|<r\}$. In $1920$, Nevanlinna (see \cite{nev}) proved that the radius of convexity of $\es$ is $2-\sqrt{3}$. In $1934$, Grunsky (see\cite[p.~141]{grunsky}) obtained the radius of starlikeness of $\mathcal{S}$ as $\tanh{\pi/4}$. We also refer the articles \cite{ba, kumar, szak, ob, sokol} for various studies on radii of convexity and starlikeness for analytic functions. We present here the definition of convolution (or Hadamard product) of two analytic functions which will be required to describe the main aim of this article. The convolution (or Hadamard product) of two functions $f,g\in \mathcal{A}$ with the power series expansions $f=\sum_{n=0}^\infty a_nz^n$ and $g(z)=\sum_{n=0}^\infty b_nz^n$ for $z\in\mathbb{D}$, is defined as 
$$
 (f\ast g)(z)=\sum_{n=0}^\infty a_n b_n z^n, \quad z\in\D.
$$
It can be easily verified that $f\ast g \in \mathcal{A}$. In \cite{sheil}, Ruscheweyh and Sheil-Small proved that $\mathcal{C}\ast \mathcal{C} \subseteq \mathcal{C}$ and $\mathcal{C}\ast \mathcal{K} \subseteq \mathcal{K}$, where $\mathcal{K}$ denotes the set of close-to-convex functions in $\mathcal{S}$. We recall that a function $f\in\A$ with $f(0)=0=f'(0)-1$ is said to be a close-to-convex function if there is a convex function $h$ such that
$$
  {\rm{Re}}\,\left(\frac{f'(z)}{h'(z)}\right)>0, \quad z\in\D.
$$
For $r\in\left(\tanh{\left(\pi/4\right)},1\right)$, let $a=a(r)=(1+r^2)(1-r^2)^{-1}$, and let $x_0(r)$ be the unique root of the polynomial
$$
 x^3-ax^2+a^2x-a=0.
$$
The radius of close-to-convexity of $\mathcal{S}$ is the unique real root $r_{cc}~(=0.8098\cdots)$ of the equation 
\begin{equation*}
    2 \operatorname{arccot}{\left(\frac{1-r^2}{1+r^2}x_0(r)\right)}+\log{\left(1+x_0^2(r)\right)}-2\log{\left(\frac{2r}{1-r^2}\right)}=0,
\end{equation*}
contained in the interval $(\tanh{(\pi/4)}, 1)$ (see \cite[Theorem~$3.2.5$]{graham}). In \cite{sheil}, Ruscheweyh and Sheil-Small proved that the set $\mathcal{S}t (1/2)$ is closed under convolution, i.e. $\mathcal{S}t(1/2)\ast\mathcal{S}t(1/2)\subseteq\mathcal{S}t(1/2)$. In \cite{small}, it is shown that for $f\in \mathcal{S}t$ and $g\in \mathcal{S}t$, $f\ast g$ need not be in $\mathcal{S}$. Later in $1997$, Y. Ling and S. Ding (\cite{ling}) obtained the radius of starlikeness and convexity of the set $\mathcal{S}t \ast \mathcal{S}t$ as $2-\sqrt{3}$ and $5-2\sqrt{6}$, respectively. However, so far only a few geometric properties of the sets $\mathcal{S} \ast \mathcal{S}$ and $\mathcal{S} \ast \mathcal{K}$ are known. Using Ruscheweyh's well-known duality principle (c.f.~\cite{ru}), in $2003$ Richard Greiner and Oliver Roth proved that the radius of convexity of the set $\mathcal{S} \ast \mathcal{K}$ is $5-2\sqrt{6}$ (see \cite[Theorem~$2.1$]{roth}) and conjectured that the radius of convexity of the set $\mathcal{S} \ast \mathcal{S}$ is $5-2\sqrt{6}$ (see \cite[Conjecture~$2.3$]{roth}), which is still open.

Our main concern is to obtain the radius of starlikeness of the set $\mathcal{S} \ast \mathcal{S}t(\alpha)$, $\alpha\in[0,1)$. For $\alpha=0$, the radius of starlikeness of the set $\mathcal{S}\ast\mathcal{S}t(0)=\mathcal{S}\ast\mathcal{S}t$ is $2-\sqrt{3}$ and the function $k\ast k=zk'(z)$ shows that $2-\sqrt{3}$ cannot be replaced by any larger number, where $k(z)=z/(1-z)^2$. This follows directly from the inclusion $\mathcal{C} \ast \mathcal{S}t\subseteq \mathcal{S}t$ (see \cite{sheil}) together with the fact that the radius of convexity of the set $\mathcal{S}$ is $2-\sqrt{3}$. Let $\alpha_1~(=0.3349\cdots)$ be the smallest positive root of the equation $20\alpha^4-52\alpha^3+15\alpha^2+12\alpha-4=0$. Then for $\alpha\in(0,1)$, it can be readily seen that a lower bound for the radius of starlikeness of the set $\mathcal{S} \ast \mathcal{S}t(\alpha)$ is $r_c(\alpha)\tanh{(\pi/4)}$, where
$$
  r_c(\alpha):=
        \begin{cases}
            \frac{1}{2-3\alpha+\sqrt{5\alpha^2-8\alpha+3}}, & \text{if} ~~\alpha\in\left(0,\alpha_1\right]\\[4pt]
            \left(\frac{5\alpha-1}{4\alpha^2-\alpha+1+4\alpha\sqrt{\alpha^2-3\alpha+2}}\right)^{1/2}, & \text{if} ~~\alpha\in\left(\alpha_1,1\right).
        \end{cases}
$$
The above result is a direct consequence of the inclusion $\mathcal{S}t\ast \mathcal{C} \subseteq \mathcal{S}t$, since the radius of starlikeness of $\mathcal{S}$ is $\tanh{(\pi/4)}$ and the radius of convexity of $\mathcal{S}t(\alpha)$ is $r_c(\alpha)$ (see \cite{schild, goel}). For $\alpha\in(0,1)$, we obtain another lower bound for the radius of starlikeness of the set $\mathcal{S} \ast \mathcal{S}t(\alpha)$ as $2-\sqrt{3}$, which can be established from the inclusion $\mathcal{C} \ast \mathcal{S}t\subseteq \mathcal{S}t$, because, the radius of convexity of the set $\mathcal{S}$ is $2-\sqrt{3}$ and $\mathcal{S}t(\alpha)\subsetneq\mathcal{S}t$, $\alpha\in(0,1)$. Thus, each function in $\mathcal{S} \ast \mathcal{S}t(\alpha)$, $\alpha\in[0,1)$, is starlike in $|z|<R_0(\alpha):=\max{\left\{2-\sqrt{3},~r_c(\alpha)\tanh{(\pi/4)}\right\}}$. Let  $\alpha_2:=\left(2-3b+\sqrt{5b^2+4b}\right)/4=0.2404\cdots$, where $b=\left(2+\sqrt{3}\right)\tanh{(\pi/4)}$. Then a little computation yields
\begin{equation}\label{P5eq1.1}
    R_0(\alpha)=
    \begin{cases}
        2-\sqrt{3}=0.2679\cdots, & \text{if} ~~\alpha\in\left[0,\alpha_2\right]\\[4pt]
        \frac{\tanh{(\pi/4)}}{2-3\alpha+\sqrt{5\alpha^2-8\alpha+3}}, & \text{if} ~~\alpha\in\left(\alpha_2,\alpha_1\right]\\[4pt]
        \left(\frac{5\alpha-1}{4\alpha^2-\alpha+1+4\alpha\sqrt{\alpha^2-3\alpha+2}}\right)^{1/2}\tanh{(\pi/4)}, & \text{if} ~~\alpha\in\left(\alpha_1,1\right).
    \end{cases}
\end{equation}
In this article, using duality between certain subsets of $\mathcal{S}$, we obtain improved radius of starlikeness of the set $\mathcal{S} \ast \mathcal{S}t(\alpha)$, for $\alpha\in(\alpha_0,1)$, where $\alpha_0=0.1361\cdots$. We now briefly describe it here. Let $\mathcal{M}$ be the set of all functions $f\in\mathcal{A}$ such that $f\ast g \in \mathcal{S}$ for every $g\in \mathcal{C}$. Clearly, $\mathcal{M}\subseteq\mathcal{S}$, which can be seen by taking $g(z)=z/(1-z)\in\mathcal{C}$. Let $f\in \mathcal{K}$. Then we have $f\ast g \in \mathcal{K}\ast \mathcal{C}$, for all $g\in\mathcal{C}$. Since $\mathcal{C}\ast \mathcal{K}\subseteq \mathcal{K}$, it follows that $f\ast g \in \mathcal{K}\subset \mathcal{S}$. This implies $f\in \mathcal{M}$, and hence $\mathcal{K}\subseteq \mathcal{M}$. On the other hand, from the definition of $\mathcal{M}$ itself, it follows that if $f\in\mathcal{M}$, then $(f\ast g)'(z)\ne 0$, $z\in\D$ for all $g\in\mathcal{C}$. Thus, we get $f(z)\ast (zg'(z))\ne 0$, $0<|z|<1$. By virtue of the Alexander's theorem (see \cite{alex}), $g\in \mathcal{C}$ if and only if $zg'(z) \in \mathcal{S}t$. It follows that $f(z)\ast h(z)\ne 0$, $0<|z|<1$, where $h(z)=zg'(z)\in \mathcal{S}t$. Therefore, if $f\in \mathcal{M}$, then $(f\ast g)(z)\ne 0$ for all $g\in \mathcal{S}t$. In other words, the set $\mathcal{M}$ is dual to $\mathcal{S}t$. Using this duality between the sets $\mathcal{M}$ and $\mathcal{S}t$, and some specific convolution techniques, we obtain the following result.
\begin{thm}\label{P5thm1}
    Let $\alpha\in[0,1)$. Let $\alpha_0~(=0.1361\cdots)$ be the smallest positive root of the equation 
    $$
      \zeta_\alpha\left(\frac{2-\sqrt{3}}{r_{cc}}\right)=0,
    $$
    where $r_{cc}$ denotes the radius of close-to-convexity of the set $\mathcal{S}$ and 
    \begin{multline*}
        \zeta_\alpha(r):=(16\alpha^3-8\alpha^2+\alpha-1)r^4+4(1-\alpha)(1-2\alpha)(3-4\alpha)r^3\\+2(8\alpha^3-44\alpha^2+47\alpha-19)r^2+12(1-\alpha)(1-2\alpha)r+9\alpha-1.
    \end{multline*}
    Then the radius of starlikeness of the set $\mathcal{S}\ast\mathcal{S}t(\alpha)$ is at least
    $$
      R(\alpha):=\begin{cases}
                2-\sqrt{3}, & \text{if} ~~\alpha\in[0,\alpha_0]\\
                R_1(\alpha)r_{cc}, & \text{if} ~~\alpha\in(\alpha_0,1),
                \end{cases}
    $$
    where $R_1(\alpha)$ is the least value of $r\in\left(3-2\sqrt{2},1\right)$ that satisfies the equation $\zeta_\alpha(r)=0$.
\end{thm}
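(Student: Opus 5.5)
The plan is to combine two facts: $\mathcal{S}$ is close-to-convex in $|z|<r_{cc}$, and $\mathcal{K}\subseteq\mathcal{M}$, where $\mathcal{M}$ is dual to $\mathcal{S}t$. For $\alpha\in[0,\alpha_0]$ there is nothing new to prove, since $R(\alpha)=2-\sqrt3\le R_0(\alpha)$ is already covered by \eqref{P5eq1.1}. So assume $\alpha\in(\alpha_0,1)$.

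Fix $f\in\mathcal{S}$ and $g\in\mathcal{S}t(\alpha)$. For $t\in(0,1]$ write $f_t(z)=f(tz)/t$. By the definition of $r_{cc}$, $h:=f_{r_{cc}}\in\mathcal{K}\subseteq\mathcal{M}$. Put $\rho=R_1(\alpha)$ and $G=g_\rho$. Since
$$(f\ast g)(r_{cc}\rho z)/(r_{cc}\rho)=(h\ast G)(z),$$
it suffices to show that $F:=h\ast G$ is starlike in $\D$. Because $F(z)/z\neq0$ (by duality, as $G\in\mathcal{S}t$), it is enough to show that $zF'(z)/F(z)\neq iy$ for every real $y$ and $0<|z|<1$. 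Equivalently, we need
$$h\ast\psi_y\neq0,\qquad \psi_y:=\frac{zG'-iyG}{1-iy}\in\A .$$
By the duality of $\mathcal{M}$ with $\mathcal{S}t$ proved above, this holds as soon as $\psi_y\in\mathcal{S}t$ for every $y\in\mathbb{R}$. The theorem is thus reduced to a statement purely about $\mathcal{S}t(\alpha)$: for $g\in\mathcal{S}t(\alpha)$ and every real $y$, the function $(zg'-iyg)/(1-iy)$ is starlike in $|z|<\rho$.

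To verify this, write $p=zg'/g=\alpha+(1-\alpha)q$ with $q$ in the Carathéodory class. Then $\psi_y=G\,(p_\rho-iy)/(1-iy)$, so
$$\frac{z\psi_y'}{\psi_y}=p+\frac{zp'}{p-iy}\qquad\text{at the point } \rho z .$$
Using $|p-iy|\ge \operatorname{Re}p$ is too crude, so I will keep $y$ and minimize over it directly. I will use the sharp Carathéodory-type estimate for $zq'$ in terms of $q$ on $|z|=r$: the value region of $(q,zq')$ is known, for example via the Dieudonné/Schwarz–Pick bound
$$|zq'(z)|\le \frac{2r\,\operatorname{Re}q(z)}{1-r^2}$$
in its sharper form involving $|q-\tfrac{1+r^2}{1-r^2}|$. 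This gives a lower bound for $\operatorname{Re}\bigl(p+zp'/(p-iy)\bigr)$ as an explicit function of $r$, $\alpha$, the position of $q$ in its disc $\bigl|q-\tfrac{1+r^2}{1-r^2}\bigr|\le\tfrac{2r}{1-r^2}$, and $y$. Minimizing this over $y$ and over the disc should yield an extremal configuration. I expect this configuration to be $g(z)=z/(1-z)^{2(1-\alpha)}$ at a real boundary point, where the sign change of the minimum occurs exactly at a root of the quartic $\zeta_\alpha(r)$. One then checks that $\zeta_\alpha$ has the same sign as the minimum for $r<R_1(\alpha)$ in $(3-2\sqrt2,1)$. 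The lower endpoint $3-2\sqrt2$ should correspond to a regime, valid for small $r$, where positivity is automatic.

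The main obstacle is this two-parameter minimization: the optimal $y$ depends on $q$, and the resulting boundary analysis has to produce exactly $\zeta_\alpha$. I would first try to fix $y$ optimally, as the value making the relevant expression in $p-iy$ tangent, and then reduce to $q$ on the boundary circle, parametrised by an angle, giving a one-variable trigonometric inequality whose discriminant is $\zeta_\alpha$. Finally, $\alpha_0$ is exactly where $R_1(\alpha)r_{cc}=2-\sqrt3$. Hence $R(\alpha)\ge2-\sqrt3$ on $(\alpha_0,1)$ and the two cases join continuously; this only requires checking monotonicity of $R_1$ in $\alpha$ near $\alpha_0$.
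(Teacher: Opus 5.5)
Your reduction is correct and genuinely different from the paper's. From $zF'-iyF=h\ast(zG'-iyG)$ and $\mathcal{K}\subseteq\mathcal{M}$, you correctly reduce the theorem to the starlikeness of $(zg'-iyg)/(1-iy)$ in $|z|<R_1(\alpha)$ for every $g\in\mathcal{S}t(\alpha)$ and every real $y$.

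The paper instead linearizes. It writes $zg'=\int_{\partial\D}\frac{1+(1-2\alpha)\sigma z}{1-\sigma z}\,g\,d\mu(\sigma)$, so that, via Lemma~5 and averaging, only the functions $\frac{1+((1-\alpha)\beta-\alpha)\sigma z}{1-\sigma z}g(z)$ must be starlike, with $\sigma$ independent of $g$. In Lemma~4 it then bounds $\operatorname{Re}(zg'/g)\ge\frac{1+(2\alpha-1)r}{1+r}$ separately from the two M\"obius terms. The quartic $\zeta_\alpha$ comes out of this decoupled, non-sharp estimate, through $H_\alpha(r,\theta_1)$ at an interior critical angle, and no single $g$ realizes that configuration.

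So your plan to recover $\zeta_\alpha$ as the sign change of the sharp minimum of your coupled problem is aimed at the wrong target. Concretely, take $g=z/(1-z)^{2(1-\alpha)}$. Your condition is automatic at $z=r$, and at $z=-r$ it reads $(1-(1-2\alpha)r)^2>2(1-\alpha)r$. For $\alpha=1/2$ this imposes nothing for $r<1$, while $R_1(1/2)=(8\sqrt2-11)^{1/2}\approx0.560$. For $\alpha=0.2$, the sharp threshold of your problem is the root $\approx0.3753$ of $0.36r^2-2.8r+1$, whereas $R_1(0.2)\approx0.3735$. So the key estimate is not only left undone; its predicted outcome is false.

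What your route actually needs is an explicit estimate followed by a comparison with $R_1(\alpha)$, and the proposal never addresses that comparison. Since $\inf_{y}\operatorname{Re}\frac{zp'}{p-iy}=\frac{\operatorname{Re}(zp')-|zp'|}{2\operatorname{Re}p}$, the requirement is $2(\operatorname{Re}p)^2+\operatorname{Re}(zp')-|zp'|>0$ for $|z|<R_1(\alpha)$ and all $p=\alpha+(1-\alpha)q$ with $q\in\mathcal{P}$.

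The estimate you dismiss as too crude is $(\operatorname{Re}p)^2>|zp'|$ combined with $|zq'|\le 2r\operatorname{Re}q/(1-r^2)$. It already beats $R_1(\alpha)$ for moderate $\alpha$; at $\alpha=1/2$ it gives $r<(\sqrt5-1)/2$. It fails near $\alpha=1$, however: at $\alpha=0.9$ it only gives about $0.762<R_1(0.9)\approx0.817$. There you must use the full disc of values of $zq'$ for given $q$, and then prove that the resulting radius dominates $R_1(\alpha)$ on all of $(\alpha_0,1)$.

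Spot checks suggest this comparison can be carried out, so the approach is plausibly completable, but as written the central inequality is missing. Your closing remark about monotonicity of $R_1$ near $\alpha_0$ is unnecessary: for $\alpha>\alpha_0$ the theorem only asserts the bound $R_1(\alpha)r_{cc}$.
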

As mentioned before, in Theorem~\ref{P5thm1}, we obtain an improved lower bound $R(\alpha)$ for the radius of starlikeness of the set $\mathcal{S} \ast \mathcal{S}t(\alpha)$, $\alpha\in(\alpha_0,1)$. In order to show this improvement, in the following table (Table~\ref{P5tab1}), we list the values of $R_0(\alpha)$ (from (\ref{P5eq1.1})) and $R(\alpha)$ (from Theorem~{\ref{P5thm1}}) for different values of $\alpha\in[0.14,1)$.
\captionsetup[table]{skip=8pt, width=\textwidth, justification=centering}
    \begin{table}[H]
    \centering
    \renewcommand{\arraystretch}{1.2}
 	\begin{tabular}{{c@{\hspace{1cm}}c@{\hspace{1cm}}c}}
    \toprule
 		  \textbf{Values of $\alpha$} & \makecell{\textbf{Values of $R_0(\alpha)$}\\ \textbf{from \eqref{P5eq1.1}}} & \makecell{\textbf{Values of $R(\alpha)$}\\\textbf{from Theorem~\ref{P5thm1}}}\\
        \midrule
              $0.14$ & $0.2679\cdots$ & $0.2701\cdots$\\
              $0.3$ & $0.3086\cdots$ & $0.3538\cdots$\\
 			 $0.5$ & $0.4467\cdots$ & $0.4535\cdots$\\
              $0.7$ & $0.5178\cdots$ & $0.5516\cdots$\\
              $0.9$ & $0.5761\cdots$ & $0.6614\cdots$\\ 
        \bottomrule
 	\end{tabular}
    \caption{Values of $R_0(\alpha)$ and $R(\alpha)$ for $\alpha\in[0.14,1)$.}
    \label{P5tab1}
 	\end{table}

\section{Proof of the main result}
To prove Theorem~\ref{P5thm1}, we need to prove several lemmas (Lemma~\ref{P5lem1}--\ref{P5lem5}). At first we prove them below.
\begin{lem}\label{P5lem1}
    For each fixed $\alpha\in(0,1)$, we have
    $$
      G_\alpha(r,\theta)>0, \quad r\in[0,1),~ \theta\in[-\pi,\pi],
    $$
    where
     \begin{align}\label{P5eq2.1}
        G_\alpha(r,\theta)& := (4\alpha^2-4\alpha+1)r^5+\alpha r^4+(8\alpha^2-3\alpha-1)r^3+(5\alpha-1)r^2\\& \quad +\alpha r+1-r\big((12\alpha^2-9\alpha+1)r^3+(5\alpha-1)r^2+(4\alpha^2+\alpha-1)r\notag\\& \quad +3\alpha+1\big)\cos{\theta}+2\alpha r^2\big((2\alpha-1)r+1\big)\cos{(2\theta)}.\notag
    \end{align}
\end{lem}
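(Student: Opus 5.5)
Write $x=\cos\theta\in[-1,1]$ and use $\cos(2\theta)=2x^2-1$. Then $G_\alpha(r,\theta)$ becomes a quadratic polynomial in $x$,
$$
  G_\alpha(r,\theta)=Q_{\alpha,r}(x):=C x^2+Bx+A,
$$
where
\begin{align*}
  C&=4\alpha r^2\big((2\alpha-1)r+1\big),\\
  B&=-r\big((12\alpha^2-9\alpha+1)r^3+(5\alpha-1)r^2+(4\alpha^2+\alpha-1)r+3\alpha+1\big),
\end{align*}
and $A$ is the $\theta$-free part of \eqref{P5eq2.1} minus $2\alpha r^2((2\alpha-1)r+1)$. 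For $\alpha\in(0,1)$ and $r\in[0,1)$ we have $(2\alpha-1)r+1>1-r>0$, so $C\ge 0$, with $C=0$ only at $r=0$. Hence $Q_{\alpha,r}$ is convex on $[-1,1]$. The lemma reduces to showing that the minimum of $Q_{\alpha,r}$ on $[-1,1]$ is positive. That minimum is attained either at an endpoint $x=\pm1$ or at the vertex $x_v=-B/(2C)$ when $x_v\in(-1,1)$. At $r=0$ we get $G=1>0$ trivially, so we may assume $r>0$.

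\emph{Endpoint $x=1$ ($\theta=0$).} A direct check shows that $G_\alpha(1,0)=0$: summing coefficients gives $12\alpha^2-16\alpha^2+4\alpha^2=0$. So $Q_{\alpha,r}(1)$, viewed as a polynomial in $r$, is divisible by $(1-r)$, and probably by a higher power of $(1-r)$, since $\theta=0$, $r\to1$ is the extremal direction. I would factor $Q_{\alpha,r}(1)=(1-r)^m P_\alpha(r)$ explicitly. Then I would show $P_\alpha(r)>0$ on $[0,1]$, either by writing $P_\alpha$ in powers of $r$ or of $(1-r)$ and grouping coefficients whose signs are controlled for $\alpha\in(0,1)$, or by using $P_\alpha(0)>0$ together with monotonicity.

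\emph{Endpoint $x=-1$ ($\theta=\pi$).} Here $Q_{\alpha,r}(-1)=A-B+C$. I expect most terms to be nonnegative, because $-B$ contributes $+r(3\alpha+1)+\cdots$. The plan is to pair any negative coefficients, such as $8\alpha^2-3\alpha-1$ or $12\alpha^2-9\alpha+1$ for small $\alpha$, with dominating positive lower-order terms, using $r^k\le r^j$ for $k\ge j$.

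\emph{Interior vertex.} When $-1<x_v<1$, the minimum equals $A-B^2/(4C)$, so positivity is equivalent to $4AC-B^2>0$ under the constraint $|B|<2C$. This is the step I expect to be the main obstacle, because it is a genuine two-variable polynomial inequality of fairly high degree in $(r,\alpha)\in(0,1)\times(0,1)$. My first attempt would be to show that the vertex condition $|B|<2C$ never holds, that is, $|B|\ge 2C$ for all $r\in(0,1)$ and $\alpha\in(0,1)$. Then the minimum always occurs at $x=1$, since $B<0$ would make $Q_{\alpha,r}$ decreasing on $[-1,1]$. This is plausible: $-B\approx r(3\alpha+1)$ for small $r$, which dominates $C=O(r^2)$, and near $r=1$ we have $-B\to 16\alpha^2$, which is at least $2C\to 16\alpha^2$. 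So I would try to prove $-B-2C\ge0$ by factoring out $r$ and then $(1-r)$, since the difference vanishes at $r=1$, and checking the sign of the remaining cubic.

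If this domination fails on some sub-range of $\alpha$, I would fall back on proving $4AC-B^2>0$ there. For that I would factor out $r^2$ and the appropriate powers of $(1-r)$, then establish positivity of the remaining polynomial in $r$ for each fixed $\alpha$, via a sign pattern of its coefficients or a Sturm-sequence argument on the relevant $\alpha$-interval.
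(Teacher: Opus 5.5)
Your main line is the paper's proof: proving $-B-2C=r(1-r)\eta_\alpha(r)\ge 0$, with $\eta_\alpha(r)=3\alpha+1-4\alpha(1-\alpha)r-(12\alpha^2-9\alpha+1)r^2>0$, is exactly the paper's bound $q_\alpha(r,\theta)\ge q_\alpha(r,0)>0$, so $G_\alpha$ is minimized at $\theta=0$ and the $x=-1$ and vertex cases never arise; then $G_\alpha(r,0)=(1-r)^2\xi_\alpha(r)$, where $\xi_\alpha(r)=1+(1-2\alpha)r+(1+2\alpha-4\alpha^2)r^2+(1-2\alpha)^2r^3$ is shown positive. One small slip: the factor left after removing $r(1-r)$ from $-B-2C$ is a quadratic, not a cubic, and the positivity of $\eta_\alpha$ and $\xi_\alpha$ still has to be checked (the paper does this by case analysis in $\alpha$).
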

\begin{pf}
   Let $\alpha\in(0,1)$. A straightforward calculation shows that
   \begin{equation}\label{P5eq2.2}
       \frac{\partial}{\partial\theta} G_\alpha(r,\theta)=rq_\alpha(r,\theta)\sin{\theta},\quad r\in[0,1],~\theta\in[-\pi,\pi],
   \end{equation}
   where
   \begin{align*}
       q_\alpha(r,\theta)&=3\alpha+1+(4\alpha^2+\alpha-1)r+(5\alpha-1)r^2+(12\alpha^2-9\alpha+1)r^3\\& \quad -8\alpha r\left(1+(2\alpha-1)r\right)\cos{\theta}.
   \end{align*}
   Since $1+(2\alpha-1)r>0$ for all $\alpha\in(0,1)$ and $r\in[0,1]$, we have
   \begin{align*}
       q_\alpha(r,\theta)& \geq q_\alpha(r,0)=(1-r)\eta_\alpha(r), \quad r\in[0,1],~\theta\in[-\pi,\pi],
   \end{align*}
   where 
   $$
    \eta_\alpha(r)=3\alpha+1-4\alpha(1-\alpha)r-(12\alpha^2-9\alpha+1)r^2.
   $$
   Then for each fixed $\alpha\in(0,1)$, we have
   $$
   \eta_\alpha'(r)=-4\alpha(1-\alpha)-2(12\alpha^2-9\alpha+1)r, \quad r\in[0,1].
   $$
   Now we consider two cases to prove that for each $\alpha\in(0,1)$, $\eta_\alpha(r)>0$, $r\in[0,1]$.
   
   \medskip
   \noindent Case (i). Let $\alpha\in\left(0,2/\left(9+\sqrt{33}\right)\right]\cup \left[2/\left(9-\sqrt{33}\right),1\right)$. Then we have
   $$
     \eta_\alpha'(r)<0, \quad r\in[0,1].
   $$
   This shows $\eta_\alpha$ is a decreasing function of $r$, $r\in[0,1]$. Therefore, 
   $$
   \eta_\alpha(r)\geq\eta_\alpha(1)=8\alpha(1-\alpha)>0, \quad r\in[0,1].
   $$
   
   \medskip
   \noindent Case (ii). Let $\alpha\in\left(2/\left(9+\sqrt{33}\right),2/\left(9-\sqrt{33}\right)\right)$. For these values of $\alpha$, we have
   $$
     \eta_\alpha'(r)=-2(12\alpha^2-9\alpha+1)(r-r_0),\quad r\in[0,1],
   $$
   where
   $$
     r_0=\frac{2\alpha(\alpha-1)}{12\alpha^2-9\alpha+1}.
   $$
   Case (ii)(a). If $\alpha\in\left(2/\left(9+\sqrt{33}\right),1/5\right)$ then $12\alpha^2-9\alpha+1<0$ and $r_0>1$. This implies $\eta_\alpha'(r)<0$, $r\in[0,1]$. Therefore,
   $$
     \eta_\alpha(r)\geq \eta_\alpha(1)=8\alpha(1-\alpha)>0,\quad r\in[0,1].
   $$
   Case (ii)(b). If $\alpha\in[1/5,1/2]$ then $12\alpha^2-9\alpha+1<0$ and $r_0\in[0,1]$. Since
   $$
     \eta_\alpha''(r_0)=-2(12\alpha^2-9\alpha+1)>0,
   $$
   it follows that $\eta_\alpha(r)$ attains its minimum at $r=r_0$. Therefore,
   $$
     \eta_\alpha(r)\geq \eta_\alpha(r_0)=\frac{4\alpha^4+28\alpha^3-11\alpha^2-6\alpha+1}{12\alpha^2-9\alpha+1}>0, \quad r\in[0,1].
   $$
   Case (ii)(c). If $\alpha\in\left(1/2,2/\left(9-\sqrt{33}\right)\right)$ then $12\alpha^2-9\alpha+1<0$ and $r_0>1$. This implies $\eta_\alpha'(r)<0$, $r\in[0,1]$. Therefore,
   $$
     \eta_\alpha(r)\geq \eta_\alpha(1)=8\alpha(1-\alpha)>0,\quad r\in[0,1].
   $$
   
   \medskip
   \noindent Thus, combining all the above cases, we get $\eta_\alpha(r)>0$, $r\in[0,1]$ for all $\alpha\in(0,1)$. It follows that for each fixed $\alpha\in(0,1)$, we have $q_\alpha(r,\theta)>0$ for all $r\in[0,1)$, $\theta\in[-\pi,\pi]$. Hence, for each fixed $\alpha\in(0,1)$, from \eqref{P5eq2.2} we have
   $$
    \frac{\partial G_\alpha}{\partial\theta}<0 \quad {\rm{if}}~~-\pi<\theta <0, \qquad {\rm{and}} \qquad \frac{\partial G_\alpha}{\partial\theta}>0 \quad {\rm{if}}~~0<\theta<\pi,
   $$
   for all $r\in(0,1)$. Therefore,
   \begin{equation}\label{P5eq2.3}
       \min_{-\pi\leq \theta\leq\pi}{G_\alpha(r,\theta)}=G_\alpha(r,0)=(1-r)^2\xi_\alpha(r), \quad r\in(0,1),
   \end{equation}
   where
   $$
     \xi_\alpha(r)=1+(1-2\alpha)r+(1+2\alpha-4\alpha^2)r^2+(1-2\alpha)^2r^3.
   $$
   We now prove that for each fixed $\alpha\in(0,1)$, $\xi_\alpha(r)>0$, $r\in[0,1]$. For each fixed $\alpha\in(0,1)$, we have
   $$
     \xi_\alpha'(r)=(1-2\alpha)+2(1+2\alpha-4\alpha^2)r+3(1-2\alpha)^2r^2, \quad r\in[0,1].
   $$
   If $\alpha \ne 1/2$, then the above equation can be written as 
   $$
     \xi_\alpha'(r)=3(1-2\alpha)^2\left(\left(r+\frac{1+2\alpha-4\alpha^2}{3(1-2\alpha)^2}\right)^2+\frac{2(1-11\alpha+20\alpha^2-4\alpha^3-8\alpha^4)}{9(1-2\alpha)^4}\right).
   $$
   Let $\alpha_3~(=0.1137\cdots)$ be the smallest positive root of the equation $1-11\alpha+20\alpha^2-4\alpha^3-8\alpha^4=0$. If $\alpha\in(0,\alpha_3]$, then $\xi_\alpha'(r)>0$, $r\in[0,1]$. Therefore, for each fixed $\alpha\in(0,\alpha_3]$, we have
   $$
     \xi_\alpha(r)\geq\xi_\alpha(0)=1>0, \quad r\in[0,1].
   $$
   If $\alpha\in(\alpha_3,1/2)\cup(1/2,1)$, then we have
   $$
     \xi_\alpha'(r)=3(1-2\alpha)^2(r-r_1)(r-r_2),
   $$
   where
   $$
     r_1=\frac{4\alpha^2-2\alpha-1-\sqrt{2(8\alpha^4+4\alpha^3-20\alpha^2+11\alpha-1)}}{3(1-2\alpha)^2},
   $$
   and
   $$
     r_2=\frac{4\alpha^2-2\alpha-1+\sqrt{2(8\alpha^4+4\alpha^3-20\alpha^2+11\alpha-1)}}{3(1-2\alpha)^2}.
   $$
   It can be verified that $r_1<0$ for all $\alpha\in(\alpha_3,1/2)\cup(1/2,1)$. Moreover, $r_2<0$ when $\alpha\in(\alpha_3,1/2)$, whereas $r_2\in(0,1)$ for $\alpha\in(1/2,1)$. Thus, if $\alpha\in(\alpha_3,1/2)$, then $\xi_\alpha'(r)>0$, $r\in[0,1]$. Therefore, for each fixed $\alpha\in(\alpha_3,1/2)$, we have
   $$
     \xi_\alpha(r)\geq\xi_\alpha(0)=1>0, \quad r\in[0,1].
   $$
   If $\alpha\in(1/2,1)$, then
   $$
     \xi_\alpha'(r)<0 \quad {\rm{if}}~~0\leq r<r_2, \qquad \xi_\alpha'(r)>0 \quad {\rm{if}}~~r_2<r\leq 1.
   $$
   Therefore, for each fixed $\alpha\in(1/2,1)$, we have
   $$
     \xi_\alpha(r)\geq\xi_\alpha(r_2)>0.
   $$
   If $\alpha=1/2$, then $\xi_\alpha(r)=1+r^2>0$, $r\in[0,1]$. Thus, $\xi_\alpha(r)>0$, $r\in[0,1]$ for each fixed $\alpha\in(0,1)$. From \eqref{P5eq2.3} it follows that
   $$
     \min_{-\pi\leq \theta\leq\pi}{G_\alpha(r,\theta)}=G_\alpha(r,0)>0, \quad r\in(0,1),
   $$
   for each fixed $\alpha\in(0,1)$. Moreover, $G_\alpha(0,\theta)=1>0$ for all $\theta\in[-\pi,\pi]$ and $\alpha\in(0,1)$. This completes the proof of the lemma.
\end{pf}

\begin{lem}\label{P5lem2}
    For each fixed $\alpha\in(0,1)$, we have
    $$
      H_\alpha(r,\theta)>0, \quad r\in[0,R_1(\alpha)),~ \theta\in[-\pi,\pi],
    $$
    where
    \begin{align}\label{P5eq2.4}
        H_\alpha(r,\theta)& := (2\alpha-1)^2r^5+(3\alpha-2)r^4+(8\alpha^2-\alpha-3)r^3+(7\alpha-3)r^2\\& \quad +(3\alpha-2)r+1-r\big((12\alpha^2-9\alpha+1)r^3+(9\alpha-5)r^2\notag\\& \quad +(4\alpha^2+5\alpha-5)r+3\alpha+1\big)\cos{\theta}+2\alpha r^2\big((2\alpha-1)r+1\big)\cos{(2\theta)}\notag,
    \end{align}
    and $R_1(\alpha)$ is defined as in \textnormal{Theorem~\ref{P5thm1}}.
\end{lem}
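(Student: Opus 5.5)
The plan is to follow the pattern of the proof of Lemma~\ref{P5lem1}, treating $H_\alpha(r,\theta)$ as a quadratic in $c=\cos\theta$. Writing $\cos(2\theta)=2c^2-1$, we get
$$
H_\alpha(r,\theta)=P_\alpha(r)-rA_\alpha(r)\,c+2\alpha r^2\big((2\alpha-1)r+1\big)(2c^2-1),
$$
where $P_\alpha$ is the quintic part and
$$
A_\alpha(r)=(12\alpha^2-9\alpha+1)r^3+(9\alpha-5)r^2+(4\alpha^2+5\alpha-5)r+3\alpha+1 .
$$
Equivalently, as in \eqref{P5eq2.2},
$$
\frac{\partial}{\partial\theta}H_\alpha(r,\theta)=r\,\tilde q_\alpha(r,\theta)\sin\theta,\qquad \tilde q_\alpha(r,\theta)=A_\alpha(r)-8\alpha r\big(1+(2\alpha-1)r\big)\cos\theta .
$$
Since $1+(2\alpha-1)r>0$, we have $\tilde q_\alpha(r,\theta)\ge \tilde q_\alpha(r,0)$, where
$$
\tilde q_\alpha(r,0)=(12\alpha^2-9\alpha+1)r^3-(16\alpha^2-17\alpha+5)r^2+(4\alpha^2-3\alpha-5)r+3\alpha+1 .
$$

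The first step is to show that $\tilde q_\alpha(r,0)>0$ on the relevant range of $r$. It equals $3\alpha+1>0$ at $r=0$ but $8\alpha-8<0$ at $r=1$, so, unlike in Lemma~\ref{P5lem1}, it does not have constant sign on $[0,1]$. I will try to show that its first positive zero lies to the right of $R_1(\alpha)$, for instance by comparing it with $\zeta_\alpha$ or by evaluating it at a crude upper bound for $R_1(\alpha)$ and using monotonicity/concavity case analysis in $\alpha$ as was done for $\eta_\alpha$. Once this holds for $r\in(0,R_1(\alpha))$, $H_\alpha(r,\cdot)$ decreases on $(-\pi,0)$ and increases on $(0,\pi)$, so $\min_\theta H_\alpha(r,\theta)=H_\alpha(r,0)$.

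If positivity of $\tilde q_\alpha(r,0)$ fails on part of the range, I will fall back on the quadratic-in-$c$ picture. There the vertex is at $c^*=A_\alpha(r)/\big(8\alpha r(1+(2\alpha-1)r)\big)$, and I would show directly that the vertex value
$$
P_\alpha(r)-2\alpha r^2\big(1+(2\alpha-1)r\big)-\frac{A_\alpha(r)^2}{8\alpha(1+(2\alpha-1)r)}
$$
is positive. I expect this to be the hardest step.

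Next I compute $H_\alpha(r,0)$. Summing coefficients gives $H_\alpha(1,0)=(12\alpha^2+8\alpha-8)-(16\alpha^2+8\alpha-8)+4\alpha^2=0$, so $(1-r)$ divides $H_\alpha(r,0)$. I expect the quotient to be (up to a positive constant factor and sign normalisation) exactly $-\zeta_\alpha(r)$ or a quartic with the same relevant roots; this is presumably how $\zeta_\alpha$ arose. I will verify the factorisation $H_\alpha(r,0)=(1-r)\,\kappa_\alpha(r)$ by direct expansion and identify $\kappa_\alpha$ with $\pm\zeta_\alpha$.

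Finally, I must check that $\kappa_\alpha(r)>0$ on $[0,R_1(\alpha))$. At $r=0$ we have $H_\alpha(0,0)=1$. By definition, $R_1(\alpha)$ is the least zero of $\zeta_\alpha$ in $(3-2\sqrt2,1)$, so $\kappa_\alpha$ has no zero in $(3-2\sqrt2,R_1(\alpha))$. It therefore remains to exclude zeros of $\kappa_\alpha$ in $[0,3-2\sqrt2]$. For this I would bound the quartic below termwise on that short interval, using $r\le 3-2\sqrt2<0.18$, so that the constant and linear terms dominate, with a case split in $\alpha$ as needed. Continuity then gives $\kappa_\alpha>0$ on $[0,R_1(\alpha))$, and with $H_\alpha(0,\theta)=1$ this completes the lemma.
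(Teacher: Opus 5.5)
\textbf{There is a genuine gap: the threshold $R_1(\alpha)$ does not come from where your plan expects.}

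Your primary route cannot work. You need $\tilde q_\alpha(r,0)>0$ on $(0,R_1(\alpha))$, but the first zero $r_0(\alpha)$ of this cubic lies strictly below $R_1(\alpha)$. At $\alpha=1/2$, for instance, $2\tilde q_{1/2}(r,0)=5-11r-r^2-r^3$ vanishes at $r\approx 0.43$. On the other hand, $\zeta_{1/2}(r)=\tfrac72-11r^2-\tfrac12 r^4$ gives $R_1(1/2)=(\sqrt{128}-11)^{1/2}\approx 0.56$. So on $(0.43,0.56)$ the minimum of $H_{1/2}(r,\cdot)$ sits at an interior angle, not at $\theta=0$.

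Your expected factorization is also wrong. Direct expansion gives $H_\alpha(r,0)=(1-r)^3\bigl(1-(2\alpha-1)^2r^2\bigr)$, which is positive for every $r\in[0,1)$ and has nothing to do with $\zeta_\alpha$. So your last two steps are aimed at the wrong function. They cannot be rescued by running them on $\zeta_\alpha$ instead: $\zeta_\alpha(0)=9\alpha-1<0$ for $\alpha<1/9$ (e.g.\ $\zeta_{0.05}$ vanishes near $r\approx 0.07$). Hence $\zeta_\alpha$ is not positive on $[0,3-2\sqrt2]$ in general, and no termwise bound there can work.

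The missing idea is that $\zeta_\alpha$ \emph{is} the vertex value of your quadratic in $c$. With $D=1+(2\alpha-1)r$,
\[
P_\alpha(r)-2\alpha r^2D-\frac{A_\alpha(r)^2}{16\alpha D}=\frac{(1-\alpha)(1+r)^2}{16\alpha D}\,\zeta_\alpha(r).
\]
Note the $16$, not $8$, in your vertex formula. The proof must then split at $r_0(\alpha)$:
\begin{itemize}
\item For $r<r_0(\alpha)$ one has $c^*>1$, so the minimum over $\theta$ is $H_\alpha(r,0)>0$.
\item For $r\ge r_0(\alpha)$, the leading coefficient $4\alpha r^2D$ is positive, so $H_\alpha$ is bounded below by the vertex value. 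This is positive whenever $\zeta_\alpha(r)>0$.
\end{itemize}

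To get $\zeta_\alpha>0$ on $[r_0(\alpha),R_1(\alpha))$ from the definition of $R_1(\alpha)$ as the least zero in $(3-2\sqrt2,1)$, you need two facts:
\begin{itemize}
\item $r_0(\alpha)>3-2\sqrt2$. The paper proves $\partial_\alpha \tilde q_\alpha(r,0)>0$ and uses $\tilde q_0(r,0)=(1+r)(r^2-6r+1)$.
\item $\zeta_\alpha(r_0(\alpha))>0$. This holds because at $r=r_0(\alpha)$ the vertex is exactly $c^*=1$, so the vertex value equals $H_\alpha(r_0,0)>0$.
\end{itemize}
Your fallback mentions the vertex but contains neither the identity nor this splitting, and together they are essentially the whole content of the lemma. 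Once you have them, using the vertex value as a global lower bound even lets you skip the paper's separate checks that $|c^*|\le 1$ and that $H_\alpha(r,\pm\pi)>0$.
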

\begin{pf}
   Let $\alpha\in(0,1)$. By a little computation, we have
   \begin{equation}\label{P5eq2.5}
       \frac{\partial}{\partial\theta}H_\alpha(r,\theta)=rp_\alpha(r,\theta)\sin{\theta},\quad r\in[0,1],~\theta\in[-\pi,\pi],
   \end{equation}
   where 
   \begin{align*}
       p_\alpha(r,\theta)&= 3\alpha+1+(4\alpha^2+5\alpha-5)r+(9\alpha-5)r^2+(12\alpha^2-9\alpha+1)r^3\\& \quad -8\alpha r\left(1+(2\alpha-1)r\right)\cos{\theta}.
   \end{align*}
   Since $1+(2\alpha-1)r>0$ for all $r\in[0,1]$ and $\alpha\in(0,1)$, we have
   $$
     p_\alpha(r,\theta)\geq p_\alpha(r,0)=u_\alpha(r), \quad r\in[0,1],~\theta\in[-\pi,\pi],
   $$
   where
   \begin{align*}
       u_\alpha(r)&:= 3\alpha+1+(4\alpha^2-3\alpha-5)r-(16\alpha^2-17\alpha+5)r^2\\& \quad +(12\alpha^2-9\alpha+1)r^3.
   \end{align*}
   The function $u_\alpha$ is a continuous function of $r$, $r\in[0,1]$ with
   $$
     u_\alpha(0)=3\alpha+1>0 \qquad {\rm{and}} \qquad u_\alpha(1)=-8(1-\alpha)<0.
   $$
   Therefore, by the intermediate value theorem we conclude the function $u_\alpha$ has at least one root in $(0,1)$. Let $r_0(\alpha)$ be the smallest positive root of the equation $u_\alpha(r)=0$. Then $u_\alpha(r)>0$ for $r\in[0,r_0(\alpha))$. It follows that for each fixed $\alpha\in(0,1)$, $p_\alpha(r,\theta)>0$ for all $r\in[0,r_0(\alpha))$, $\theta\in[-\pi,\pi]$. Thus, for each fixed $\alpha\in(0,1)$, we have  
   $$
    \frac{\partial H_\alpha}{\partial\theta}<0 \quad {\rm{if}}~~-\pi<\theta <0, \qquad {\rm{and}} \qquad \frac{\partial H_\alpha}{\partial\theta}>0 \quad {\rm{if}}~~0<\theta<\pi,
   $$
   for all $r\in(0,r_0(\alpha))$. Therefore,
   \begin{equation}\label{P5eq2.6}
       \min_{-\pi\leq \theta\leq\pi}{H_\alpha(r,\theta)}=H_\alpha(r,0)=(1-r)^3\left(1-(2\alpha-1)^2r^2\right)>0, \quad r\in(0,r_0(\alpha))
   \end{equation}
   for each fixed $\alpha\in(0,1)$. For $r\in[r_0(\alpha),1]$, from \eqref{P5eq2.5} we have
   $$
     \frac{\partial}{\partial \theta}H_\alpha(r,\theta)=0,
   $$
   if  
   $$
     \theta=\theta_1,~\pm\pi,~0,
   $$
   where
   \begin{equation}\label{P5eq2.7}
       \cos{\theta_1}=\frac{3\alpha+1+(4\alpha^2+5\alpha-5)r+(9\alpha-5)r^2+(12\alpha^2-9\alpha+1)r^3}{8\alpha r\left(1+(2\alpha-1)r\right)}.
   \end{equation}
   We now check the existence of such $\theta_1$ mentioned in \eqref{P5eq2.7}. For $r\in[r_0(\alpha),1]$, it can be verified that the right-hand side of \eqref{P5eq2.7} belongs to $[-1,1]$ if $v_\alpha(r)\geq 0$, where
   $$
     v_\alpha(r):=3\alpha+1+(4\alpha^2+10\alpha-6)r+(12\alpha^2-9\alpha+1)r^2.
   $$
   Now we consider four cases to check the positivity of $v_\alpha(r)$, $r\in[0,1]$ for each fixed $\alpha\in(0,1)$.

   \medskip
   \noindent Case (i). Let $\alpha\in\left(0,\left(\sqrt{17}-1\right)/8\right)$. The function $v_\alpha$ is a continuous function of $r$, $r\in[0,1]$ with
   $$
     v_\alpha(0)=3\alpha+1>0, \qquad {\rm{and}} \qquad v_\alpha(1)=4(4\alpha^2+\alpha-1)<0.
   $$
   Therefore, by the intermediate value theorem we conclude the function $v_\alpha$ has at least one root in $(0,1)$. Let $r_1(\alpha)$ be the smallest positive root of the equation $v_\alpha(r)=0$, where $\alpha\in\left(0,\left(\sqrt{17}-1\right)/8\right)$. Then $v_\alpha(r)\geq 0$ for $r\in[0,r_1(\alpha)]$, where
   $$
     r_1(\alpha)=\frac{3-5\alpha-2\alpha^2-2\sqrt{\alpha^4-4\alpha^3+7\alpha^2-6\alpha+2}}{12\alpha^2-9\alpha+1}.
   $$

   \medskip
   \noindent Case (ii). Let $\alpha\in\left[\left(\sqrt{17}-1\right)/8,1/2\right)$. For these fixed values of $\alpha$, we have $12\alpha^2-9\alpha+1<0$ and $2(3+\alpha)(2\alpha-1)<0$. For each fixed $\alpha\in(0,1)$, we have
   $$
     v_\alpha'(r)=2(3+\alpha)(2\alpha-1)+2(12\alpha^2-9\alpha+1)r, \quad r\in[0,1].
   $$
   Therefore, if $\alpha\in\left[\left(\sqrt{17}-1\right)/8,1/2\right)$, then $v_\alpha'(r)<0$ for all $r\in[0,1]$. This shows $v_\alpha$ is a decreasing function of $r$, $r\in[0,1]$. Thus,
   $$
     v_\alpha(r)\geq v_\alpha(1)=4(4\alpha^2+\alpha-1)\geq 0, \quad r\in[0,1].
   $$

   \medskip
   \noindent Case (iii). Let $\alpha\in\left[1/2,2/\left(9-\sqrt{33}\right)\right)$. For these fixed values of $\alpha$, we have
   $$
     v_\alpha'(r)=2(12\alpha^2-9\alpha+1)(r-r_1(\alpha)),\quad r\in[0,1],
   $$
   where
   $$
     r_2(\alpha)=\frac{(3+\alpha)(1-2\alpha)}{12\alpha^2-9\alpha+1}.
   $$
   Case (iii)(a). If $\alpha\in\left[1/2,1/\left(\sqrt{8}-1\right)\right]$, then $12\alpha^2-9\alpha+1<0$ and $r_2(\alpha)\in[0,1]$. This implies $r=r_2(\alpha)$ is the only critical point in $[0,1]$. Computing the values of $v_\alpha(r)$ for $r=0,1,r_2(\alpha)$, we get
   \begin{align*}
       v_\alpha(0)&= 3\alpha+1>0,\\
       v_\alpha(1)&= 4(4\alpha^2+\alpha-1)>0,\\
       v_\alpha(r_2(\alpha))&=\frac{4(1-\alpha)^2(-2+2\alpha-\alpha^2)}{12\alpha^2-9\alpha+1}>0.
   \end{align*}
   Therefore, $v_\alpha(r)>0$, $r\in[0,1]$.

   \medskip
   \noindent Case (iii)(b). If $\alpha\in\left(1/\left(\sqrt{8}-1\right),2/\left(9-\sqrt{33}\right)\right)$, then $12\alpha^2-9\alpha+1<0$ and $r_2(\alpha)>1$. This implies $v_\alpha'(r)>0$ for all $r\in[0,1]$. This shows $v_\alpha$ is an increasing function of $r$, $r\in[0,1]$. Therefore,
   $$
     v_\alpha(r)\geq v_\alpha(0)=3\alpha+1>0, \quad r\in[0,1].
   $$

   \medskip
   \noindent Case (iv). Let $\alpha\in\left[2/\left(9-\sqrt{33}\right),1\right)$. For these fixed values of $\alpha$, we have $12\alpha^2-9\alpha+1>0$ and $2(3+\alpha)(2\alpha-1)>0$. Therefore, $v_\alpha'(r)>0$ for all $r\in[0,1]$. This shows $v_\alpha$ is an increasing function of $r$, $r\in[0,1]$. Thus,
   $$
     v_\alpha(r)\geq v_\alpha(0)=3\alpha+1>0, \quad r\in[0,1].
   $$

   \medskip
   \noindent Thus, combining all the cases we get $v_\alpha(r)\geq 0$, $r\in[0,r_3(\alpha)]$ for each fixed $\alpha\in(0,1)$, where
   \begin{equation}\label{P5eq2.8}
       r_3(\alpha)=
        \begin{cases}
           \frac{3-5\alpha-2\alpha^2-2\sqrt{\alpha^4-4\alpha^3+7\alpha^2-6\alpha+2}}{12\alpha^2-9\alpha+1}, & \text{if}~\alpha\in\left(0,\left(\sqrt{17}-1\right)/8\right) \\
           1, & \text{if}~\alpha\in\left[\left(\sqrt{17}-1\right)/8,1\right).
        \end{cases}
   \end{equation}
   Therefore, if $r\in[r_0(\alpha),r_3(\alpha)]$ then there exists $\theta_1\in[-\pi,\pi]$ such that \eqref{P5eq2.7} holds. For $r\in[r_0(\alpha),r_3(\alpha)]$, computing the values of $H_\alpha(r,\theta)$ for $\theta=\pm\pi,~0,~\theta_1$, we get
   \begin{align}
       H_\alpha(r,\pm\pi)&=(1+r)^3\left(1+(6\alpha-4)r+(1-2\alpha)^2r^2\right),\notag\\
       H_\alpha(r,0)&=(1-r)^3\left(1-(1-2\alpha)^2r^2\right)>0, \notag\\
       H_\alpha(r,\theta_1)&=\frac{(1-\alpha)(1+r)^2}{16\alpha\left(1+(2\alpha-1)r\right)}\zeta_\alpha(r),\label{P5eq2.9}
   \end{align}
    where $\zeta_\alpha(r)$ is defined in the statement of Theorem~\ref{P5thm1}. The right hand side of \eqref{P5eq2.9} is strictly positive if $\zeta_\alpha(r)>0$. It can be verified that $\zeta_\alpha(r_0(\alpha))>0$ and $\zeta_\alpha(r_3(\alpha))<0$ for all $\alpha\in(0,1)$. Therefore, by the intermediate value theorem we conclude the function $\zeta_\alpha$ has at least one root in $\left(r_0(\alpha),r_3(\alpha)\right)$ for each fixed $\alpha\in(0,1)$. Let $R_1(\alpha)$ be the least value of $r\in \left(r_0(\alpha),r_3(\alpha)\right)$ that satisfies the equation $\zeta_\alpha(r)=0$. Then $\zeta_\alpha(r)>0$, $r\in[r_0(\alpha),R_1(\alpha))$. We now find the range of $R_1(\alpha)$, $\alpha\in(0,1)$. For all $r\in[0,1]$, we have
   $$
     \frac{\partial}{\partial\alpha}u_\alpha(r)=3-3r+17r^2-9r^3+8\alpha r(1-r)(1-3r), \quad \alpha\in[0,1].
   $$
   It is easy to see that $3-3r+17r^2-9r^3>0$, $r\in[0,1]$. If $r\in[0,1/3]$, then $8r(1-r)(1-3r)>0$. Therefore,
   \begin{equation}\label{P5eq2.10}
       \frac{\partial}{\partial\alpha}u_\alpha(r)>0, \quad \alpha\in[0,1],
   \end{equation}
   for all $r\in[0,1/3]$. If $r\in(1/3,1]$, then $8r(1-r)(1-3r)<0$. Therefore,
   \begin{align*}
       \frac{\partial}{\partial\alpha}u_\alpha(r)& \geq 3-3r+17r^2-9r^3+8r(1-r)(1-3r)\\& =3+5r(1-3r+3r^2),
   \end{align*}
   for all $\alpha\in[0,1]$, $r\in(1/3,1]$. Since $1-3r+3r^2>0$, $r\in(1/3,1]$, we get
   \begin{equation}\label{P5eq2.11}
       \frac{\partial}{\partial\alpha}u_\alpha(r)>0, \quad \alpha\in[0,1],
   \end{equation}
   for all $r\in(1/3,1]$. Thus, from \eqref{P5eq2.10} and \eqref{P5eq2.11} it follows that for all $r\in[0,1]$, $u_\alpha(r)$ is an increasing function of $\alpha$, $\alpha\in[0,1]$. This proves that the value of $r_0(\alpha)$ increases as the value of $\alpha$ increases, because $r_0(\alpha)$ is the smallest positive root of the equation $u_\alpha(r)=0$. Therefore,
   $$
     r_0(\alpha)>r_0(0)=3-2\sqrt{2}, \quad \alpha\in(0,1).
   $$
   Since $R_1(\alpha)\in(r_0(\alpha),r_3(\alpha))$, from \eqref{P5eq2.8} and the above inequality, it follows that $R_1(\alpha)\in\left(3-2\sqrt{2},1\right)$. Since $\zeta_\alpha(r)>0$, $r\in[r_0(\alpha),R_1(\alpha))$, from \eqref{P5eq2.9} we get $H_\alpha(r,\theta_1)>0$, $r\in[r_0(\alpha),R_1(\alpha))$. Moreover, it can be verified that $H_\alpha(r,\pm\pi)>0$, $r\in[r_0(\alpha),R_1(\alpha))$. Therefore, we have
   $$
     \min_{-\pi\leq \theta\leq\pi}{H_\alpha(r,\theta)}>0, \quad r\in \left[r_0(\alpha),R_1(\alpha)\right)
   $$
   for each fixed $\alpha\in(0,1)$. Since $H_\alpha(0,\theta)=1$ for all $\theta\in[-\pi,\pi]$ and $\alpha\in(0,1)$, by \eqref{P5eq2.6} and the above inequality, the proof of the lemma is complete.
\end{pf}

\begin{lem}\label{P5lem3}
    For each fixed $\alpha\in(0,1)$, we have
    $$
      T_\alpha(r,\theta)>0, \quad r\in[0,R_1(\alpha)),~ \theta\in[-\pi,\pi],
    $$
    where
    \begin{align}\label{P5eq2.12}
        T_\alpha(r,\theta) & := (4\alpha^3-6\alpha^2+4\alpha-1)r^5+(2\alpha^2-2\alpha+1)r^4+2(2\alpha^3+\alpha-1)r^3\\& \quad +2(2\alpha^2-\alpha+1)r^2+(2\alpha-1)r+1-\big((8\alpha^3-6\alpha^2+3\alpha-1)r^4\notag\\& \quad +(6\alpha^2-5\alpha+3)r^3+(4\alpha^2+3\alpha-3)r^2+(3\alpha+1)r\big)\cos{\theta}\notag\\& \quad +2\alpha r^2\big((2\alpha-1)r+1\big)\cos{(2\theta)},\notag
    \end{align}
    and $R_1(\alpha)$ is defined as in \textnormal{Theorem~\ref{P5thm1}}.
\end{lem}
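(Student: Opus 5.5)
The plan is to exploit the structural similarity between $T_\alpha$ and the functions $G_\alpha$, $H_\alpha$ of Lemma~\ref{P5lem1} and Lemma~\ref{P5lem2}. All three have the constant term $1$, the same coefficient $-(3\alpha+1)r$ of $\cos\theta$ in the lowest order, and the identical $\cos(2\theta)$ term $2\alpha r^2\big((2\alpha-1)r+1\big)\cos(2\theta)$. Hence the differences $T_\alpha-G_\alpha$ and $T_\alpha-H_\alpha$ are of the form $A(r)+B(r)\cos\theta$. I would first try to find weights $\lambda(r)\in[0,1]$ and a remainder such that
$$
T_\alpha(r,\theta)=\lambda(r)\,G_\alpha(r,\theta)+\big(1-\lambda(r)\big)H_\alpha(r,\theta)+c(r)\,(1-\cos\theta)+d(r),
$$
with $c(r),d(r)\ge 0$ on $[0,R_1(\alpha))$. (For instance, the coefficient of $r$ in $T_\alpha$ is exactly the average of those in $G_\alpha$ and $H_\alpha$, which suggests $\lambda\approx 1/2$ near $r=0$.) The approach would go as follows.
\begin{itemize}
\item Matching the $\cos\theta$ coefficients determines $\lambda(r)$ together with $c(r)$.
\item The positivity of $T_\alpha$ on $[0,R_1(\alpha))\times[-\pi,\pi]$ would then follow immediately from Lemma~\ref{P5lem1} (valid on all of $[0,1)$) and Lemma~\ref{P5lem2} (valid on $[0,R_1(\alpha))$).
\end{itemize}

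If such a clean decomposition fails, I would instead repeat the direct scheme used in Lemma~\ref{P5lem2}.
\begin{itemize}
\item Differentiate to get $\partial_\theta T_\alpha=r\,s_\alpha(r,\theta)\sin\theta$, where
$$
s_\alpha(r,\theta)=3\alpha+1+(4\alpha^2+3\alpha-3)r+(6\alpha^2-5\alpha+3)r^2+(8\alpha^3-6\alpha^2+3\alpha-1)r^3-8\alpha r\big(1+(2\alpha-1)r\big)\cos\theta.
$$
\item Since $1+(2\alpha-1)r>0$, we have $s_\alpha(r,\theta)\ge s_\alpha(r,0)$. On the interval where $s_\alpha(r,0)>0$, the minimum over $\theta$ is attained at $\theta=0$.
\item Factor $T_\alpha(r,0)$. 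I expect a factor such as $(1-r)^2$ or $(1-r)^3$ times a simple polynomial, and I would check that polynomial's positivity by cases in $\alpha$, as was done for $\xi_\alpha$.
\item Beyond the first zero of $s_\alpha(r,0)$, the candidates for the minimum are $\theta=0$, $\theta=\pm\pi$ and an interior critical point $\theta_2$ with $\cos\theta_2=(\text{polynomial})/\big(8\alpha r(1+(2\alpha-1)r)\big)$.
\item Evaluate $T_\alpha(r,\pm\pi)$, which should factor with a power of $(1+r)$, and $T_\alpha(r,\theta_2)$ after substituting $\cos\theta_2$.
\end{itemize}

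The main obstacle is the value at the interior critical point. The hope is that $T_\alpha(r,\theta_2)$ equals a positive factor times a polynomial that dominates $\zeta_\alpha(r)$ on $[r_0(\alpha),R_1(\alpha))$, or is a positive combination of $\zeta_\alpha(r)$ and manifestly positive terms. Then the sign information $\zeta_\alpha>0$ on $[r_0(\alpha),R_1(\alpha))$ from the proof of Lemma~\ref{P5lem2} would give $T_\alpha>0$ there. I would first try to compute $T_\alpha(r,\theta_2)-\kappa(r)\zeta_\alpha(r)$ for a suitable positive $\kappa$ and show that this difference is nonnegative by a case analysis in $\alpha$ of the kind already used for $\eta_\alpha$ and $v_\alpha$. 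Finally, I would record that $T_\alpha(0,\theta)=1$ to cover $r=0$.
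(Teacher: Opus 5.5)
Your first route is correct and differs from the paper's, and it closes with a constant weight $\lambda\equiv\frac12$. Comparing coefficients, the $\cos 2\theta$ terms cancel, and the coefficients of $r$ and of $r\cos\theta$ match (as you observed for $r$). This gives the exact identity
\[
T_\alpha(r,\theta)-\tfrac12\big(G_\alpha(r,\theta)+H_\alpha(r,\theta)\big)=2(1-\alpha)^2r^2\Big[\big(2+2\alpha r+r^2+(2\alpha-1)r^3\big)-r\big(3+(4\alpha-1)r\big)\cos\theta\Big].
\]
Since $3+(4\alpha-1)r>0$, the bracket is smallest at $\theta=0$. There it equals $(1-r)\big(2+(2\alpha-1)r(1-r)\big)>0$ for $r\in[0,1)$. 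In your notation this means $c(r)=2(1-\alpha)^2r^3\big(3+(4\alpha-1)r\big)$ and $d(r)=2(1-\alpha)^2r^2(1-r)\big(2+(2\alpha-1)r(1-r)\big)$. Hence $T_\alpha\ge\frac12(G_\alpha+H_\alpha)>0$ on $[0,R_1(\alpha))\times[-\pi,\pi]$ by Lemma~\ref{P5lem1} and Lemma~\ref{P5lem2}. You should write this identity out explicitly, since your proposal only says you would try to find such a decomposition.

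The paper never compares $T_\alpha$ with $G_\alpha,H_\alpha$. It writes $\partial_\theta T_\alpha=r\,s_\alpha(r,\theta)\sin\theta$ and shows $s_\alpha(r,\theta)\ge s_\alpha(r,0)>0$ for $r<r_4$. Here $r_4=1$ when $\alpha<1/2$, $r_4$ is the first zero of $s_\alpha(r,0)$ when $\alpha>1/2$, and $\alpha=1/2$ is handled separately. The minimum over $\theta$ is then $T_\alpha(r,0)=(1-r)w_\alpha(r)>0$. Finally the paper proves $r_4>R_1(\alpha)$ from the monotonicity of $\zeta_\alpha$ on $(3-2\sqrt2,1]$ and the check $\zeta_\alpha(r_4)<0$.

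The two routes trade off as follows. The paper's argument does not depend on Lemma~\ref{P5lem2} and gives positivity on the larger range $[0,r_4)$, which is all of $[0,1)$ when $\alpha\le1/2$. However, it rests on several numerically verified claims about $s_\alpha(r,0)$, $w_\alpha$ and $\zeta_\alpha(r_4)$. Your identity makes Lemma~\ref{P5lem3} a two-line corollary of the previous two lemmas, on exactly the range needed in Lemma~\ref{P5lem4}. As a check, at $\theta=0$ it reproduces the paper's $w_\alpha$ coefficient by coefficient.

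Your fallback is therefore unnecessary. If you did pursue it, note two points. The interior critical point $\theta_2$ you flag as the main obstacle never occurs for $r<R_1(\alpha)$; that is exactly what $r_4>R_1(\alpha)$ says. Also, $T_\alpha(r,0)$ has only a single factor $1-r$, because $w_\alpha(1)=4(1-\alpha)^2\neq0$.
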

\begin{pf}
   Let $\alpha\in(0,1)$. A straightforward calculation shows that
   \begin{equation}\label{P5eq2.13}
       \frac{\partial}{\partial\theta}T_\alpha(r,\theta)=rs_\alpha(r,\theta)\sin{\theta}, \quad r\in[0,1],~\theta\in[-\pi,\pi],
   \end{equation}
   where
   \begin{align*}
       s_\alpha(r,\theta)& =(8\alpha^3-6\alpha^2+3\alpha-1)r^3+(6\alpha^2-5\alpha+3)r^2+(4\alpha^2+3\alpha-3)r+3\alpha+1\\& \quad -8\alpha r\left((2\alpha-1)r+1\right)\cos{\theta}.
   \end{align*}
   Since $(2\alpha-1)r+1>0$ for all $r\in[0,1]$ and $\alpha\in(0,1)$, we have
   \begin{equation}\label{P5eq2.14}
       s_\alpha(r,\theta)\geq s_\alpha(r,0), \quad r\in[0,1],~\theta\in[-\pi,\pi],
   \end{equation}
   where
   \begin{equation}\label{P5eq2.15}
       s_\alpha(r,0)=3\alpha+1+(4\alpha^2-5\alpha-3)r-(10\alpha^2-3\alpha-3)r^2+(8\alpha^3-6\alpha^2+3\alpha-1)r^3.
   \end{equation}
   For each fixed $\alpha\in(0,1)$, we have
   $$
     s_\alpha'(r,0)=(4\alpha^2-5\alpha-3)-2(10\alpha^2-3\alpha-3)r+3(8\alpha^3-6\alpha^2+3\alpha-1)r^2, \quad r\in[0,1].
   $$
   If $\alpha\ne1/2$ then the above equation can be written as
   $$
     s_\alpha'(r,0)=3(2\alpha-1)(4\alpha^2-\alpha+1)(r-r_1)(r-r_2),
   $$
   where
   $$
     r_1=\frac{10\alpha^2-3\alpha-3-\sqrt{2\alpha(15-24\alpha-57\alpha^2+146\alpha^3-48\alpha^4)}}{3(2\alpha-1)(4\alpha^2-\alpha+1)},
   $$
   and
   $$
     r_2=\frac{10\alpha^2-3\alpha-3+\sqrt{2\alpha(15-24\alpha-57\alpha^2+146\alpha^3-48\alpha^4)}}{3(2\alpha-1)(4\alpha^2-\alpha+1)}.
   $$
   It can be verified that $r_1>1$ if $\alpha\in(0,1/2)$, and $r_1<0$ if $\alpha\in(1/2,1)$. Moreover, $r_2\in(0,1)$ when $\alpha\in(0,5/12)$, whereas $r_2 \geq 1$ for $\alpha\in[5/12,1/2)\cup(1/2,1)$. Since $4\alpha^2-\alpha+1>0$ for $\alpha\in(0,1)$, we have 
   $$
     s_\alpha'(r,0)<0 \quad {\rm{if}}~~0\leq r<r_2, \qquad {\rm{and}} \qquad s_\alpha'(r,0)>0 \quad {\rm{if}}~~r_2<r\leq 1,
   $$
   for each fixed $\alpha\in(0,5/12)$. Therefore, for each fixed $\alpha\in (0,5/12)$, we have
   $$
     \min_{r\in[0,1]}{s_\alpha(r,0)}=s_\alpha(r_2,0)>0.
   $$
   If $\alpha\in [5/12,1/2)\cup(1/2,1)$, then 
   $$
     s_\alpha'(r,0)\leq 0, \quad r\in[0,1].
   $$
   This shows $s_\alpha(r,0)$ is a decreasing function of $r$, $r\in[0,1]$ for each fixed $\alpha\in [5/12,1/2)\cup(1/2,1)$. If $\alpha\in[5/12,1/2)$, then
   $$
     {s_\alpha(r,0)}\geq s_\alpha(1,0)=4\alpha(1-\alpha)(1-2\alpha)>0, \quad r\in[0,1].
   $$
   If $\alpha\in(1/2,1)$ then
   $$
     s_\alpha(0,0)=3\alpha+1>0 \qquad {\rm{and}} \qquad s_\alpha(1,0)=4\alpha(1-\alpha)(1-2\alpha)<0.
   $$
   Therefore, by the intermediate value theorem it follows that $s_\alpha(r,0)$ has at least one root in $(0,1)$ for each fixed $\alpha\in(1/2,1)$. Let $r_3$ be the smallest positive root of the equation $s_\alpha(r,0)=0$, $\alpha\in(1/2,1)$. Then $s_\alpha(r,0)>0$ for $r\in[0,r_3)$, $\alpha\in(1/2,1)$. Thus, from \eqref{P5eq2.14} it follows that
   $$
     s_\alpha(r,\theta)>0, \quad r\in[0,r_4), ~\theta\in[-\pi,\pi]
   $$
   for each fixed $\alpha\in(0,1/2)\cup(1/2,1)$, where
   \begin{equation}\label{P5eq2.16}
        r_4=
     \begin{cases}
        1, & \text{if} ~~\alpha\in(0,1/2)\\
        r_3, & \text{if} ~~\alpha\in(1/2,1),   
     \end{cases}
   \end{equation}
   with $r_3$ being the smallest positive root of the equation $s_\alpha(r,0)=0$. Here $s_\alpha(r,0)$ is defined in \eqref{P5eq2.15}. Therefore, for each fixed $\alpha\in(0,1/2)\cup(1/2,1)$, from \eqref{P5eq2.13} we have
   $$
     \frac{\partial T_\alpha}{\partial\theta}<0 \quad {\rm{if}}~~-\pi<\theta <0, \qquad {\rm{and}} \qquad \frac{\partial T_\alpha}{\partial\theta}>0 \quad {\rm{if}}~~0<\theta<\pi,
   $$
   for all $r\in(0,r_4)$. It follows that for each fixed $\alpha\in(0,1/2)\cup(1/2,1)$,
   $$
     \min_{-\pi\leq \theta\leq\pi}{T_\alpha(r,\theta)}=T_\alpha(r,0)=(1-r)w_\alpha(r), \quad r\in(0,r_4),
   $$
   where
   \begin{align*}
       w_\alpha(r)& := 1-(1+\alpha)r+4(1-\alpha)r^2+(4\alpha^3-2\alpha^2+\alpha-1)r^3\\& \quad +(1-4\alpha+6\alpha^2-4\alpha^3)r^4.
   \end{align*}
   For each fixed $\alpha\in(0,1)$, we have
   \begin{align*}
       w_\alpha'(r)& = -1-\alpha+8(1-\alpha)r+3(4\alpha^3-2\alpha^2+\alpha-1)r^2\\& \quad +4(1-2\alpha)(1-2\alpha+2\alpha^2)r^3, \quad r\in[0,1].
   \end{align*}
   It can be verified that there exists a unique real number $r_5\in[0,1]$ such that $w_\alpha'(r_5)=0$ and $w_\alpha(r_5)>0$ for each fixed $\alpha\in(0,1)$. Moreover, $w_\alpha(0)=1>0$ and $w_\alpha(1)=4(1-\alpha)^2>0$ for $\alpha\in(0,1)$. This implies
   $$
     w_\alpha(r)>0, \quad r\in[0,1], ~\alpha\in(0,1).
   $$
   Thus,
   \begin{equation}\label{P5eq2.17}
       \min_{-\pi\leq \theta\leq\pi}{T_\alpha(r,\theta)}=T_\alpha(r,0)=(1-r)w_\alpha(r)>0, \quad r\in(0,r_4)
   \end{equation}
   for each fixed $\alpha\in(0,1/2)\cup(1/2,1)$. If $\alpha=1/2$, then from \eqref{P5eq2.13} we have
   $$
     \frac{\partial}{\partial\theta}T_\alpha(r,\theta)=\frac{r}{2}(4r^2-r+5-8r\cos{\theta})\sin{\theta}.
   $$
   It can be readily seen that $4r^2-r+5-8r\cos{\theta}\geq 4r^2-9r+5>0$ for $r\in[0,1)$, $\theta\in[-\pi,\pi]$. Thus, for $\alpha=1/2$,
   $$
    \frac{\partial T_\alpha}{\partial\theta}<0 \quad {\rm{if}}~~-\pi<\theta <0, \qquad {\rm{and}} \qquad \frac{\partial T_\alpha}{\partial\theta}>0 \quad {\rm{if}}~~0<\theta<\pi,
   $$
   for all $r\in(0,1)$. Therefore,
   \begin{equation}\label{P5eq2.18}
       \min_{-\pi\leq \theta\leq\pi}{T_\alpha(r,\theta)}=T_\alpha(r,0)=\frac{1}{2}(1-r)(2-3r+4r^2-r^3)>0, \quad r\in(0,1),
   \end{equation}
   for $\alpha=1/2$. Thus, by \eqref{P5eq2.17} and \eqref{P5eq2.18} we get
   \begin{equation}\label{P5eq2.19}
       \min_{-\pi\leq \theta\leq\pi}{T_\alpha(r,\theta)}>0, \quad r\in(0,r_4)
   \end{equation}
   for each fixed $\alpha\in(0,1)$, where $r_4$ is defined in \eqref{P5eq2.16}. It can be verified that for each fixed $\alpha\in(0,1)$,
    $$
        \zeta_\alpha'(r)<0, \quad r\in\left(3-2\sqrt{2},1\right],
    $$
    where $\zeta_\alpha$ is defined in the statement of Theorem~\ref{P5thm1}. This shows $\zeta_\alpha$ is a strictly decreasing function of $r$, $r\in\left(3-2\sqrt{2},1\right]$. By a little computation, we see that
    $$
      \zeta_\alpha(r_4)<0=\zeta_\alpha(R_1(\alpha)), \quad \alpha\in(0,1).
    $$
    Since $r_4\in\left(3-2\sqrt{2},1\right]$ and $R_1(\alpha)\in\left(3-2\sqrt{2},1\right]$, $\alpha\in(0,1)$, and $\zeta_\alpha$ is a strictly decreasing function of $r$, $r\in\left(3-2\sqrt{2},1\right]$, from the above inequality it follows that
    $$
      r_4>R_1(\alpha),\quad \alpha\in(0,1).
    $$
    Thus, by \eqref{P5eq2.19} and the above inequality we get
    $$
      T_\alpha(r,\theta)>0, \quad r\in(0,R_1(\alpha)), ~ \theta\in[-\pi,\pi]
    $$
   for each fixed $\alpha\in(0,1)$. Moreover, $T_\alpha(0,\theta)=1>0$ for all $\theta\in[-\pi,\pi]$, $\alpha\in(0,1)$. This completes the proof of the lemma.
\end{pf}

Using Lemmas~\ref{P5lem1},~\ref{P5lem2},~\ref{P5lem3} and the duality between the sets $\mathcal{M}$ and $\mathcal{S}t$, we prove the next lemma.  

\begin{lem}\label{P5lem4}
   Let $\alpha \in (0,1)$. If $\phi\in \mathcal{K}$ and $g\in \mathcal{S}t(\alpha)$, then for each complex number $\sigma \in \partial \D$ and $\beta \in \partial \D$, we have
    $$
     \phi(z) \ast \left(\frac{1+\left((1-\alpha)\beta-\alpha\right)\sigma R_1(\alpha)z}{1-\sigma R_1(\alpha)z} g\left(R_1(\alpha)z\right)\right) \ne 0, \quad 0<|z|<1,
    $$
    where $R_1(\alpha)$ is defined as in \textnormal{Theorem~\ref{P5thm1}}.
\end{lem}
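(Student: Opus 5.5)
The plan is to reduce the statement to the duality between $\mathcal{M}$ and $\mathcal{S}t$ established in the introduction. Write $R=R_1(\alpha)$ and $c=(1-\alpha)\beta-\alpha$, and put
$$
h(z):=\frac{1}{R}\,\frac{1+c\sigma Rz}{1-\sigma Rz}\,g(Rz),\qquad z\in\D .
$$
Then $h(0)=0$ and $h'(0)=1$, and the function in the statement is $R\,h(z)$. Since $\mathcal{K}\subseteq\mathcal{M}$ and $\phi\ast F\neq 0$ on $0<|z|<1$ for every $\phi\in\mathcal{M}$ and every $F\in\mathcal{S}t$, it suffices to show that $h\in\mathcal{S}t$ for every $g\in\mathcal{S}t(\alpha)$ and every $\sigma,\beta\in\partial\D$. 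That is, we must show ${\rm Re}\,(zh'(z)/h(z))>0$ on $\D$.

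Put $w=Rz$, so $|w|<R$. A logarithmic derivative gives
$$
\frac{zh'(z)}{h(z)}=\frac{wg'(w)}{g(w)}+\frac{c\sigma w}{1+c\sigma w}+\frac{\sigma w}{1-\sigma w}
=\alpha+(1-\alpha)p(w)+\frac{(1+c)\sigma w}{(1+c\sigma w)(1-\sigma w)},
$$
where $p$ has positive real part and $p(0)=1$. The denominators do not vanish because $|c|\le 1$ and $|w|<1$. The expression is harmonic in $w$, so by the minimum principle it is enough to work on $|w|=r<R$.

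On $|w|=r$ we use the sharp bound ${\rm Re}\,p(w)\ge (1-r)/(1+r)$. For the second term we write $\sigma w=re^{i\theta}$ and use that $c$ ranges over the circle centred at $-\alpha$ of radius $1-\alpha$. After multiplying by $(1+r)\,|1+c\sigma w|^2|1-\sigma w|^2>0$, the positivity of ${\rm Re}\,(zh'/h)$ becomes the positivity of an explicit trigonometric polynomial in $\theta$ (and in $\arg\beta$), with coefficients polynomial in $r$ and $\alpha$.

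Next I would minimise over $\beta$ for fixed $r$ and $\theta$. I expect the extremal configurations to be $\beta=1$ (so $c=1-2\alpha$), $\beta=-1$ (so $c=-1$, where the correction term vanishes and $h$ reduces to a dilation of $g$), and an interior critical value. Each case should collapse to a quintic-in-$r$ trigonometric polynomial in $\cos\theta$ and $\cos 2\theta$, which I would identify with $G_\alpha$, $H_\alpha$ and $T_\alpha$ of \eqref{P5eq2.1}, \eqref{P5eq2.4} and \eqref{P5eq2.12}. Lemma~\ref{P5lem1} gives $G_\alpha>0$ for all $r<1$, while Lemmas~\ref{P5lem2} and \ref{P5lem3} give $H_\alpha>0$ and $T_\alpha>0$ for $r<R_1(\alpha)$. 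Together these yield ${\rm Re}\,(zh'/h)>0$ on $\D$, hence $h\in\mathcal{S}t$ and the lemma follows.

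The main obstacle is the algebra of this reduction. One must carry out the joint minimisation over $\beta$ and the choice of $p$ and check that the resulting numerator really factors into these three expressions, with the positive prefactors $(1+r)$ and the moduli squared correctly accounted for. The binding case is the one giving $H_\alpha$: it is the case that forces the radius $R_1(\alpha)$ through $\zeta_\alpha$ in \eqref{P5eq2.9}.
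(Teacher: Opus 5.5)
Your overall reduction matches the paper's. Duality ($\mathcal{K}\subseteq\mathcal{M}$, with $\mathcal{M}$ dual to $\mathcal{S}t$) reduces everything to showing $h\in\mathcal{S}t$. Then come the logarithmic derivative, the bound ${\rm Re}\,(wg'(w)/g(w))\ge(1+(2\alpha-1)r)/(1+r)$, and clearing the positive denominator $(1+r)|1+c\sigma w|^2|1-\sigma w|^2$.

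The gap is in the step you defer as ``the main obstacle'', and the mechanism you predict there is wrong. Write $\beta=e^{it}$. Each of $|c|^2$, ${\rm Re}(c\sigma w)$ and $|1+c\sigma w|^2$ is affine in $(\cos t,\sin t)$, and in the cleared numerator none of them is multiplied by another. So the numerator is exactly
\[
Q_\alpha=T_\alpha(r,\theta)+A_\alpha(r,\theta)\sin t+B_\alpha(r,\theta)\cos t .
\]
Since $\beta$ runs over a whole circle, there are no endpoint configurations. The exact minimum over $\beta$ is $T_\alpha-\sqrt{A_\alpha^2+B_\alpha^2}$, attained at a $t$ depending on $(r,\theta)$.

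Your three configurations do not produce $G_\alpha,H_\alpha,T_\alpha$:
\begin{itemize}
\item At $\beta=-1$ the numerator is $T_\alpha-B_\alpha=(1+(2\alpha-1)r)|1-re^{i\theta}|^4$, which is trivially positive.
\item At $\beta=1$ it is $T_\alpha+B_\alpha$. Its $\cos2\theta$-coefficient is $2(2\alpha-1)r^2(1+(2\alpha-1)r)$, which differs from the common coefficient $2\alpha r^2(1+(2\alpha-1)r)$ of $G_\alpha$ and $H_\alpha$.
\item $T_\alpha$ is the mean of $Q_\alpha$ over $t$, not an extremal value.
\end{itemize}
In any case, positivity at three chosen values of $\beta$ would not control the whole circle.

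The missing idea is the identity
\[
T_\alpha^2-\bigl(A_\alpha^2+B_\alpha^2\bigr)=G_\alpha H_\alpha ,
\]
which is how the three lemmas actually combine. Put $S=\sqrt{A_\alpha^2+B_\alpha^2}$. Lemmas~\ref{P5lem1} and \ref{P5lem2} give $G_\alpha H_\alpha>0$ for $r<R_1(\alpha)$, i.e.\ $(T_\alpha-S)(T_\alpha+S)>0$. Lemma~\ref{P5lem3} gives $T_\alpha>0$, hence $T_\alpha+S>0$. Therefore $T_\alpha-S>0$, so $Q_\alpha>0$ for every $t$.

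Thus $G_\alpha$ and $H_\alpha$ are not separate cases but the two factors of $T_\alpha^2-A_\alpha^2-B_\alpha^2$, and $T_\alpha$ only fixes the sign of $T_\alpha-S$. You are right that $H_\alpha$ is the binding factor: its value at $\theta_1$ is proportional to $\zeta_\alpha$, which produces $R_1(\alpha)$. But without the amplitude bound $Q_\alpha\ge T_\alpha-S$ and this factorization, your argument does not reach the conclusion.
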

\begin{pf}
   Let $\phi\in \mathcal{K}$ and $g\in\mathcal{S}t(\alpha)$, $\alpha \in (0,1)$. For each complex number $\sigma \in \partial \D$ and $\beta \in \partial \D$, let
   $$
     h(z):=\frac{1+\left((1-\alpha)\beta-\alpha\right)\sigma z}{1-\sigma z} g(z), \quad z\in\D.
   $$
   Then we need to prove 
   $$
    \phi(z)\ast h(R_1(\alpha)z) \ne 0, \quad 0<|z|<1,
   $$
   where $R_1(\alpha)$ is defined in the statement of the lemma. A straightforward calculation shows that
   \begin{equation}\label{P5eq2.20}
       \frac{zh'(z)}{h(z)}=\frac{\left((1-\alpha)\beta-\alpha\right)\sigma z}{1+\left((1-\alpha)\beta-\alpha\right)\sigma z}+\frac{\sigma z}{1-\sigma z}+\frac{zg'(z)}{g(z)}, \quad z\in \D.
   \end{equation}
   Let
   $$
     \psi(z):=z\left(\frac{g(z)}{z}\right)^{\frac{1}{1-\alpha}}, \quad z\in\D.
   $$
   By a little computation, we see that
   \begin{equation}\label{P5eq2.21}
        \frac{z\psi'(z)}{\psi(z)}=\frac{1}{1-\alpha}\left(\frac{zg'(z)}{g(z)}\right)-\frac{\alpha}{1-\alpha},\quad z\in\D.
   \end{equation}
   Since $g\in\mathcal{S}t(\alpha)$, we have ${\rm{Re}}\,\left(zg'(z)/g(z)\right)>\alpha$, $z\in\D$. Thus, from \eqref{P5eq2.21} we get
   $$
     {\rm{Re}}\,\left(\frac{z\psi'(z)}{\psi(z)}\right)>\frac{\alpha}{1-\alpha}-\frac{\alpha}{1-\alpha}=0, \quad z\in\D.
   $$
   This shows $\psi\in\mathcal{S}t$. It follows that $z\psi'(z)/\psi(z)\in \mathcal{P}$ - the class of analytic functions $p$ in $\D$ such that $p(0)=1$ and ${\rm{Re}}\,p(z)>0$, $z\in\D$. Therefore,
   $$
    {\rm{Re}}\,\left(\frac{z\psi'(z)}{\psi(z)}\right)\geq \frac{1-|z|}{1+|z|},\quad z\in\D;
   $$
   (see \cite[Theorem~2.1.3]{graham}). Thus, with the help of the above inequality, from \eqref{P5eq2.21} we get
   $$
    {\rm{Re}}\,\left(\frac{zg'(z)}{g(z)}\right)\geq \frac{1+(2\alpha-1)|z|}{1+|z|},\quad z\in\D.
   $$
   Applying the above inequality, from \eqref{P5eq2.20} it follows that
   \begin{equation*}
       {\rm{Re}}\,\left(\frac{zh'(z)}{h(z)}\right)\geq {\rm{Re}}\,\left(\frac{\left((1-\alpha)\beta-\alpha\right)\sigma z}{1+\left((1-\alpha)\beta-\alpha\right)\sigma z}+\frac{\sigma z}{1-\sigma z}\right)+\frac{1+(2\alpha-1)|z|}{1+|z|},\quad z\in\D.
   \end{equation*}
   Let $\beta=e^{it}$ and $\sigma z=re^{i\theta}$, where $r\in(0,1)$, $\theta\in[-\pi,\pi)$, and $t\in[-\pi,\pi)$. Then after elementary computations, we obtain
   \begin{equation}\label{P5eq2.22}
       {\rm{Re}}\,\left(\frac{zh'(z)}{h(z)}\right)\geq \frac{Q_\alpha(r,\theta,t)}{(1+|z|)\left|1+\left((1-\alpha)\beta-\alpha\right)\sigma z\right|^2\left|1-\sigma z\right|^2},\quad z\in\D,
   \end{equation}
   where 
   $$
     Q_\alpha(r,\theta,t)=T_\alpha(r,\theta)+A_\alpha(r,\theta)\sin{t}+B_\alpha(r,\theta)\cos{t},
   $$
   with $T_\alpha$ as defined in \eqref{P5eq2.12} and
   \begin{align*}
       A_\alpha(r,\theta) &= (1-\alpha)\big((3-4\alpha)r^4-r^3+(1-4\alpha)r^2-3r\big)\sin{\theta}\\& \quad +2(1-\alpha)r^2\big((2\alpha-1)r+1\big)\sin{(2\theta)},\\
       B_\alpha(r,\theta) &= -2(1-\alpha)r^2\left(2\alpha+1+(2\alpha^2+2\alpha-1)r+\alpha r^2+(2\alpha^2-\alpha)r^3\right)\\& \quad +(1-\alpha)r\left(3+(4\alpha-1)r+(6\alpha+1)r^2+(8\alpha^2+2\alpha-3)r^3\right)\cos{\theta}\\& \quad -2(1-\alpha)r^2\big((2\alpha-1)r+1\big)\cos{(2\theta)}.
   \end{align*}
   If we let $A_\alpha(r,\theta)=M\cos{t_1}$ and $B_\alpha(r,\theta)=M\sin{t_1}$, then we have
   $$
    Q_\alpha(r,\theta,t)=T_\alpha(r,\theta)+M\sin{(t+t_1)}.
   $$
   Thus,
   \begin{equation}\label{P5eq2.23}
       Q_\alpha(r,\theta,t)\geq T_\alpha(r,\theta)-M=T_\alpha(r,\theta)-\sqrt{(A_\alpha(r,\theta))^2+(B_\alpha(r,\theta))^2}.
   \end{equation}
   A straightforward calculation yields
   \begin{equation}\label{P5eq2.24}
       \left(T_\alpha(r,\theta)\right)^2-\left((A_\alpha(r,\theta))^2+(B_\alpha(r,\theta))^2\right)=G_\alpha(r,\theta)H_\alpha(r,\theta),
   \end{equation}
   where $G_\alpha$ and $H_\alpha$ are defined in \eqref{P5eq2.1} and \eqref{P5eq2.4}, respectively. By Lemmas~\ref{P5lem1} and \ref{P5lem2}, it follows that
   $$
     G_\alpha(r,\theta)H_\alpha(r,\theta)>0, \quad r\in \left[0,R_1(\alpha)\right),~\theta\in[-\pi,\pi)
   $$
   for each fixed $\alpha\in(0,1)$. Applying the above inequality, from \eqref{P5eq2.24} we get
   \begin{equation}\label{P5eq2.25}
       \left(T_\alpha(r,\theta)\right)^2-\left((A_\alpha(r,\theta))^2+(B_\alpha(r,\theta))^2\right)>0, \quad r\in \left[0,R_1(\alpha)\right),~\theta\in[-\pi,\pi)
   \end{equation}
   for each fixed $\alpha\in(0,1)$. On the other hand, by Lemma~\ref{P5lem3}, we have
   $$
     T_\alpha(r,\theta)+\sqrt{(A_\alpha(r,\theta))^2+(B_\alpha(r,\theta))^2}>0, \quad r\in[0,R_1(\alpha)),~~\theta\in[-\pi,\pi)
   $$
   for each fixed $\alpha\in(0,1)$. Therefore, from \eqref{P5eq2.25} and the above inequality it follows that
   $$
    T_\alpha(r,\theta)-\sqrt{(A_\alpha(r,\theta))^2+(B_\alpha(r,\theta))^2}>0, \quad r\in[0,R_1(\alpha)),~~\theta\in[-\pi,\pi)
   $$
   for each fixed $\alpha\in(0,1)$. Using this, from \eqref{P5eq2.23} we get
   $$
    Q_\alpha(r,\theta,t)>0, \quad r\in[0,R_1(\alpha)),~~\theta\in[-\pi,\pi),~~t\in[-\pi,\pi)
   $$
   for each fixed $\alpha\in(0,1)$. Thus, from \eqref{P5eq2.22} we have
   $$
    {\rm{Re}}\,\left(\frac{zh'(z)}{h(z)}\right)>0,\quad |z|=r<R_1(\alpha),~\alpha\in(0,1).
   $$
   This implies
   $$
    \frac{1}{R_1(\alpha)}h(R_1(\alpha)z) \in \mathcal{S}t, \quad \alpha\in(0,1).
   $$
   It was shown in Section~\ref{P5sec1} that if $f\in \mathcal{M}$, then $(f\ast g)(z)\ne 0$ for all $g\in \mathcal{S}t$, and the set $\mathcal{M}$ contains the set $\mathcal{K}$. Since $\phi \in \mathcal{K}$, it follows that
   $$
    \phi(z) \ast \left(\frac{1}{R_1(\alpha)}h(R_1(\alpha)z)\right)\ne 0, \quad 0<|z|<1, ~ \alpha\in(0,1).
   $$
   This proves the lemma. 
\end{pf}
\begin{lem}\label{P5lem5}
     Let $\phi$ and $g$ be analytic functions defined in $\D$ and satisfy $\phi(0)=0=g(0)$ with $\phi'(0)\ne 0$ and $g'(0)\ne 0$. If for each complex number $\sigma \in \partial \D$ and $\beta \in \partial \D$,
     \begin{equation}\label{P5eq2.26}
          \phi(z) \ast \left(\frac{1+\left((1-\alpha)\beta-\alpha\right)\sigma Rz}{1-\sigma Rz} g(Rz)\right) \ne 0, \quad 0<|z|<1,
     \end{equation}
    where $R\in(0,1)$ and $\alpha\in (0,1)$, then 
    $$
     {\rm{Re}}\, \left(\frac{\phi(z) \ast \left(\frac{1+(1-2\alpha)\sigma Rz}{1-\sigma Rz}g(Rz)\right)}{\phi(z)\ast g(Rz)}\right)>0, \quad z\in\D.
    $$
\end{lem}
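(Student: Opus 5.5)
Write
$$
F(z):=\phi(z)\ast g(Rz),\qquad K(z):=\phi(z)\ast\Big(\frac{1+(1-2\alpha)\sigma Rz}{1-\sigma Rz}\,g(Rz)\Big),
$$
and observe that
$$
\frac{1+((1-\alpha)\beta-\alpha)w}{1-w}=\frac{1-\alpha}{2}\,\frac{1+w}{1-w}(1+\beta)+\frac{1+\alpha}{2}\cdot 1\cdot(\ldots)
$$
is linear in $\beta$. The cleaner identity I will use is
$$
\frac{1+((1-\alpha)\beta-\alpha)w}{1-w}=\frac{1+(1-2\alpha)w}{1-w}+(1-\alpha)(\beta-1)\frac{w}{1-w}\cdot\frac{1}{1}\cdot\Big(\ldots\Big).
$$
More directly, I check the decomposition
$$
\frac{1+((1-\alpha)\beta-\alpha)w}{1-w}=\frac{1-\beta}{2}\cdot 1+\frac{1+\beta}{2}\cdot\frac{1+(1-2\alpha)w}{1-w},
$$
which holds because the coefficient of $w$ on the right-hand side after clearing $1-w$ is $-\frac{1-\beta}{2}+\frac{1+\beta}{2}(1-2\alpha)=(1-\alpha)\beta-\alpha$, and the constant term is $1$. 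Since convolution is linear, the hypothesis \eqref{P5eq2.26} becomes
$$
\frac{1-\beta}{2}F(z)+\frac{1+\beta}{2}K(z)\ne 0,\qquad 0<|z|<1,\ \beta\in\partial\D .
$$

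Next I handle the zeros. Taking $\beta=-1$ gives $F(z)\ne0$ for $0<|z|<1$. Moreover $F(z)=\phi'(0)g'(0)Rz+\cdots$ with nonzero leading coefficient, so $F$ has a simple zero only at the origin. Since $K$ has the same linear coefficient, $K(0)=0$ and $K'(0)=F'(0)$. Hence $p(z):=K(z)/F(z)$ extends analytically to $\D$ with $p(0)=1$. Dividing the displayed relation by $F$ gives $(1-\beta)+(1+\beta)p(z)\ne0$, that is,
$$
p(z)\ne -\frac{1-\beta}{1+\beta}\qquad(\beta\ne-1).
$$
As $\beta$ runs over $\partial\D\setminus\{-1\}$, the quantity $\frac{1-\beta}{1+\beta}$ runs over the entire imaginary axis. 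Therefore $p$ omits every purely imaginary value on $0<|z|<1$, and also at $z=0$, since $p(0)=1$.

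To conclude, $p(\D)$ is a connected set that misses the imaginary axis and contains $1$. So $p(\D)$ lies in the right half-plane, which gives ${\rm Re}\,p>0$, the claim.

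The only delicate step is the algebraic decomposition in $\beta$ together with the removable singularity at $0$. Both are routine, so I expect no real obstacle beyond verifying the coefficient identity.
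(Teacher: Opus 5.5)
Your proof is correct and follows essentially the same route as the paper: the same decomposition in $\beta$, the choice $\beta=-1$ to get $\phi\ast g(Rz)\neq 0$, and the observation that the quotient omits the imaginary axis while equalling $1$ at the origin (you are a bit more explicit than the paper about the removable singularity, via $\phi'(0)g'(0)R\neq 0$). Delete the two abandoned draft identities containing $(\ldots)$ at the start, since they are incomplete and the argument does not use them.
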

\begin{pf}
     We first observe that if $\beta=-1$, then by \eqref{P5eq2.26} we get
    $$
     \phi(z) \ast g(Rz) \ne 0, \quad 0<|z|<1.
    $$
    For each complex number $\sigma \in \partial \D$ and $\beta \in \partial \D$, we have
    \begin{align*}
         &\phi(z) \ast \left(\frac{1+\left((1-\alpha)\beta-\alpha\right)\sigma Rz}{1-\sigma Rz} g(Rz)\right)\\=& \left(\frac{1+\beta}{2}\right)\phi(z) \ast \left(\frac{1+(1-2\alpha)\sigma Rz}{1-\sigma Rz}g(Rz)\right)
         +\left(\frac{1-\beta}{2}\right) \phi(z) \ast g(Rz), \quad z\in \D.
    \end{align*}
   Dividing the above equation by $\phi(z) \ast g(Rz)$, we get
   $$
     \left(\frac{1+\beta}{2}\right)\frac{\phi(z) \ast \left(\frac{1+(1-2\alpha)\sigma Rz}{1-\sigma Rz}g(Rz)\right)}{\phi(z)\ast g(Rz)}=\frac{\phi(z) \ast \left(\frac{1+\left((1-\alpha)\beta-\alpha\right)\sigma Rz}{1-\sigma Rz} g(Rz)\right)}{\phi(z) \ast g(Rz)}-\frac{1-\beta}{2},
   $$
   for $0<|z|<1$. If we assume $\beta \ne -1$, then by \eqref{P5eq2.26}, and from the above equation we get
    $$
     \frac{\phi(z) \ast \left(\frac{1+(1-2\alpha)\sigma Rz}{1-\sigma Rz}g(Rz)\right)}{\phi(z)\ast g(Rz)} \ne -\frac{1-\beta}{1+\beta}, \quad 0<|z|<1.
    $$
    Thus, the function in the left-hand side of the above relation does not take any value on the imaginary axis, but, clearly has the value $1$ at $z=0$. Hence, the lemma follows.
\end{pf}

\begin{pf}[Proof of Theorem~1]
    Let $g\in \mathcal{S}t(\alpha)$, $\alpha\in(0,1)$. Then from \eqref{P5eq2.21} we have
    \begin{equation}\label{P5eq2.27}
        \frac{zg'(z)}{g(z)}=\alpha+(1-\alpha)\left(\frac{z\psi'(z)}{\psi(z)}\right), \quad z\in\D,
    \end{equation}
    where $\psi\in\mathcal{S}t$. By the Herglotz formula we have
    $$
     \frac{z\psi'(z)}{\psi(z)}=\int\limits_{\partial\D}\frac{1+\sigma z}{1-\sigma z}~d\mu(\sigma), \quad z\in\D,
    $$
    where $\mu$ is a probability measure on $\partial\D$. Thus, from \eqref{P5eq2.27} we get
    $$
     \frac{zg'(z)}{g(z)}=\alpha+(1-\alpha)\int\limits_{\partial\D}\frac{1+\sigma z}{1-\sigma z}~d\mu(\sigma), \quad z\in\D,
    $$
    which simplifies to
    $$
     zg'(z)=\int\limits_{\partial\D}\frac{1+(1-2\alpha)\sigma z}{1-\sigma z}g(z)~d\mu(\sigma), \quad z\in\D.
    $$
    By a little computation, from the above equation we get
    \begin{equation}\label{P5eq2.28}
        \frac{f(z)\ast \left(Rzg'(Rz)\right)}{f(z)\ast g(Rz)}=\int\limits_{\partial\D}\frac{f(z)\ast \left(\frac{1+(1-2\alpha)\sigma Rz}{1-\sigma Rz}g(Rz)\right)}{f(z)\ast g(Rz)}~d\mu(\sigma), \quad z\in\D,
    \end{equation}
    where $f\in\mathcal{S}$ and $R\in (0,1)$. Now, it is easy to see that 
    \begin{equation*}
        \frac{Rz(f\ast g)'(Rz)}{(f\ast g)(Rz)}=\frac{f(z)\ast \left(Rzg'(Rz)\right)}{f(z)\ast g(Rz)}, \quad 0<|z|<1.
    \end{equation*}
    Thus, from \eqref{P5eq2.28} and the above equality we get
    \begin{equation}\label{P5eq2.29}
        \frac{Rz(f\ast g)'(Rz)}{(f\ast g)(Rz)}=\int\limits_{\partial\D}\frac{f(z)\ast \left(\frac{1+(1-2\alpha)\sigma Rz}{1-\sigma Rz}g(Rz)\right)}{f(z)\ast g(Rz)}~d\mu(\sigma), \quad z\in\D.
    \end{equation}
    Since $f\in \mathcal{S}$ and the radius of close-to-convexity of $\mathcal{S}$ is $r_{cc}$, we have
    $$
     \frac{1}{r_{cc}}f(r_{cc}z)\in \mathcal{K}, \quad z\in\D.
    $$
    By Lemma~\ref{P5lem4}, it follows that for each complex number $\sigma \in \partial \D$ and $\beta \in \partial \D$,
    $$
     \frac{1}{r_{cc}}f(r_{cc}z)\ast \left(\frac{1+\left((1-\alpha)\beta-\alpha\right)\sigma R_1(\alpha)z}{1-\sigma R_1(\alpha)z} g\left(R_1(\alpha)z\right)\right) \ne 0, \quad 0<|z|<1,
    $$
    which is equivalent to
    $$
      f(z)\ast \left(\frac{1+\left((1-\alpha)\beta-\alpha\right)\sigma R_1(\alpha)r_{cc}z}{1-\sigma R_1(\alpha)r_{cc}z} g\big(R_1(\alpha)r_{cc}z\big)\right) \ne 0, \quad 0<|z|<1.
    $$
    Consequently, applying Lemma~\ref{P5lem5}, from the above inequality we get
    $$
      {\rm{Re}}\, \left(\frac{f(z) \ast \left(\frac{1+(1-2\alpha)\sigma R_1(\alpha)r_{cc}z}{1-\sigma R_1(\alpha)r_{cc}z}g\big(R_1(\alpha)r_{cc}z\big)\right)}{f(z)\ast g\big(R_1(\alpha)r_{cc}z\big)}\right)>0, \quad z\in\D.
    $$
    Using this, from \eqref{P5eq2.29} we get
    $$
      {\rm{Re}}\,\left(\frac{R_1(\alpha)r_{cc}z(f\ast g)'\big(R_1(\alpha)r_{cc}z\big)}{(f\ast g)\big(R_1(\alpha)r_{cc}z\big)}\right)>0, \quad z\in\D,
    $$
    i.e.
    $$
      {\rm{Re}}\,\left(\frac{z(f\ast g)'(z)}{(f\ast g)(z)}\right)>0, \quad |z|<R_1(\alpha)r_{cc}.
    $$
    This implies $f\ast g$ is starlike in $|z|<R_1(\alpha)r_{cc}$. As we have discussed in Section~\ref{P5sec1}, a previously known lower bound for the radius of starlikeness of $\mathcal{S}\ast\mathcal{S}t(\alpha)$, $\alpha\in[0,1)$, is $R_0(\alpha)$, where $R_0(\alpha)$ is defined in \eqref{P5eq1.1}. Thus, if $f\in\mathcal{S}$ and $g\in\mathcal{S}t(\alpha)$, $\alpha\in(0,1)$, then $f\ast g$ is starlike in $|z|<R(\alpha):=\max\left\{R_0(\alpha),R_1(\alpha)r_{cc}\right\}$. It can be verified that for each fixed $\alpha\in(0,1)$,
    $$
        \zeta_\alpha'(r)<0, \quad r\in\left(3-2\sqrt{2},1\right),
    $$
    where $\zeta_\alpha$ is defined in the statement of Theorem~\ref{P5thm1}. Therefore, for each fixed $\alpha\in(0,1)$, $\zeta_\alpha$ is a strictly decreasing function of $r$, $r\in\left(3-2\sqrt{2},1\right)$. We now determine the value of $R(\alpha)$, $\alpha\in(0,1)$. Let $\alpha_0$ be the smallest positive root of the equation
    $$
      \zeta_\alpha\left(\frac{2-\sqrt{3}}{r_{cc}}\right)=0.
    $$
    Then a straightforward calculation shows that
    $$
      \zeta_\alpha\left(\frac{2-\sqrt{3}}{r_{cc}}\right)\leq0=\zeta_\alpha\left(R_1(\alpha)\right), \quad \alpha\in(0,\alpha_0].
    $$
    Since $\zeta_\alpha$ is a strictly decreasing function of $r$, $r\in\left(3-2\sqrt{2},1\right)$ and $3-2\sqrt{2}<(2-\sqrt{3})/{r_{cc}}<1$, $R_1(\alpha)\in\left(3-2\sqrt{2},1\right)$, $\alpha\in(0,1)$, from the above inequality it follows that
    $$
      \frac{2-\sqrt{3}}{r_{cc}}\geq R_1(\alpha), \quad \alpha\in(0,\alpha_0],
    $$
    i.e.
    \begin{equation}\label{P5eq2.30}
        2-\sqrt{3}\geq R_1(\alpha)r_{cc}, \quad \alpha\in(0,\alpha_0].
    \end{equation}
    Let $\alpha_1~(=0.3349\cdots)$ be the smallest positive root of the equation $20\alpha^4-52\alpha^3+15\alpha^2+12\alpha-4=0$. Let $b=\left(2+\sqrt{3}\right)\tanh{(\pi/4)}$ and $\alpha_2=\left(2-3b+\sqrt{5b^2+4b}\right)/4=0.2404\cdots$. Then a little computation yields 
    $$
      \zeta_\alpha\left(\frac{2-\sqrt{3}}{r_{cc}}\right)>0, \quad \alpha\in(\alpha_0,\alpha_2],
    $$
    $$
      \zeta_\alpha\left(\frac{\tanh{(\pi/4)}}{r_{cc}\left(2-3\alpha+\sqrt{5\alpha^2-8\alpha+3}\right)}\right)>0, \quad \alpha\in(\alpha_2,\alpha_1],
    $$
    $$
      \zeta_\alpha\left(\frac{\tanh{(\pi/4)}(5\alpha-1)^{1/2}}{r_{cc}\left(4\alpha^2-\alpha+1+4\alpha\sqrt{\alpha^2-3\alpha+2}\right)^{1/2}}\right)>0, \quad \alpha\in(\alpha_1,1).
    $$
    Thus, we have
    $$
      \zeta_\alpha\left(\frac{R_0(\alpha)}{r_{cc}}\right)>0=\zeta_\alpha\left(R_1(\alpha)\right), \quad \alpha\in(\alpha_0,1).
    $$ 
    Since $\zeta_\alpha$ is a strictly decreasing function of $r$, $r\in\left(3-2\sqrt{2},1\right)$ and $3-2\sqrt{2}<R_0(\alpha)/r_{cc}<1$, $R_1(\alpha)\in\left(3-2\sqrt{2},1\right)$, $\alpha\in(0,1)$,  from the above inequality it follows that
    $$
      \frac{R_0(\alpha)}{r_{cc}}<R_1(\alpha), \quad \alpha\in(\alpha_0,1),
    $$
    i.e.
    \begin{equation}\label{P5eq2.31}
        R_0(\alpha)<R_1(\alpha)r_{cc}, \quad \alpha\in(\alpha_0,1).
    \end{equation}
    Thus, from \eqref{P5eq2.30} and \eqref{P5eq2.31} we get the required value of $R(\alpha)$, $\alpha\in(0,1)$. This completes the proof of Theorem~\ref{P5thm1}.
\end{pf}
\section{\bf Statements and  Declarations}
\noindent{\bf Funding:}\, Souvik Biswas has received research support from IIT Kharagpur.\\
\noindent{\bf Competing interests:}\, The authors have no relevant financial or non-financial interests to disclose.\\
\noindent{\bf Data availability statements:}\, No datasets were used during the preparation of this manuscript.\\
\noindent{\bf Author Contributions:}\, Both authors have contributed equally to the article.


\begin{thebibliography}{99}

\bibitem{alex}{\sc J. W. Alexander}: Functions which map the interior of the unit circle upon simple regions, \textit{Ann. of Math. (\textsl{2})} {\bf 17} (1915), no. 1, 12--22.

\bibitem{ba}{\sc \'{A}. Baricz, D. K. Dimitrov, H. Orhan, and N. Ya\u{g}mur}: Radii of starlikeness of some special functions, \textit{Proc. Amer. Math. Soc.} {\bf 144} (2016), no. 8, 3355--3367.

\bibitem{kumar}{\sc \'{A}. Baricz, P. Kumar, and S. Singh}: On starlikeness of regular {C}oulomb wave functions, \textit{Proc. Amer. Math. Soc.} {\bf 151} (2023), no. 6, 2325--2338.

\bibitem{szak}{\sc \'{A}. Baricz, A. Szak\'{a}l, R. Sz\'{a}sz, and N. Ya\u{g}mur}: Radii of starlikeness and convexity of a product and cross-product of {B}essel functions, \textit{Results Math.} {\bf 73} (2018), no. 2, Paper No. 62, 34 pp.

\bibitem{goodman}{\sc A. W. Goodman}: The rotation theorem for starlike univalent functions, \textit{Proc. Amer. Math. Soc.} {\bf 4} (1953), 278--286.

\bibitem{graham}{\sc I. Graham and G. Kohr}: Geometric function theory in one and higher dimensions, \textit{Monographs and Textbooks in Pure and Applied Mathematics}, {\bf 255.} Marcel Dekker, Inc., New York (2003), {\rm{xviii}}+530.

\bibitem{roth}{\sc R. Greiner and O. Roth}: On the radius of convexity of linear combinations of univalent functions and their derivatives, \textit{Math. Nachr.} {\bf 254/255} (2003), 153--164.

\bibitem{grunsky}{\sc H. Grunsky}: Zwei Bemerkungen zur konformen Abbildung, \textit{Jber. Deutsch. Math.-Verein.} {\bf 43} (1934), 140--143.

\bibitem{ling}{\sc Y. Ling and S. Ding}: On radii of starlikeness and convexity for convolutions of starlike functions, \textit{Internat. J. Math. Math. Sci.} {\bf 20} (1997), no. 2, 403--404.

\bibitem{nev}{\sc R. Nevanlinna}: {\"U}ber die konforme Abbildung von Sterngebieten, \textit{\"Oversikt av Finska Vetenskaps-Soc. F\"orh.} {\bf 63(A)} (1920-21), 1--21.

\bibitem{ob}{\sc M. Obradovi\'{c} and S. Ponnusamy}: Starlikeness of sections of univalent functions, \textit{Rocky Mountain J. Math.} {\bf 44} (2014), no. 3, 1003--1014.

\bibitem{rob}{\sc M. S. Robertson}: On the theory of univalent functions, \textit{Ann. Math.} {\bf 37} (1936), 374--408.

\bibitem{ru}{\sc S. Ruscheweyh}: Duality for {H}adamard products with applications to extremal problems for functions regular in the unit disc, \textit{Trans. Amer. Math. Soc.} {\bf 210} (1975), 63--74.

\bibitem{sheil}{\sc S. Ruscheweyh and T. Sheil-Small}: Hadamard Products of Schlicht Functions and the P\'{o}lya-Schoenberg conjecture, \textit{Comment. Math. Helv.} {\bf 48} (1973), 119--135.

\bibitem{schild}{\sc A. Schild}: On starlike functions of order $\alpha$, \textit{Am. J. Math.} {\bf 87} (1965), no. 1, 65--70.

\bibitem{small}{\sc T. Sheil-Small, H. Silverman and E. Silvia}: Convolution multiplier and starlike functions, \textit{J. Analyse Math.} {\bf 41} (1982), 181--192.

\bibitem{goel}{\sc V. Singh and R. M. Goel}: On radii of convexity and starlikeness of some classes of functions, \textit{J. Math. Soc. Japan} {\bf 23} (1971), 323--339.

\bibitem{sokol}{\sc J. Sok\'{o}\l}: Radius problems in the class {${\mathscr{SL}}^*$}, \textit{Appl. Math. Comput.} {\bf 214} (2009), no. 2, 569--573.

\bibitem{str}{\sc E. Strohh{\"a}cker}: Beitr{\"a}ge zur Theorie der schlichten Funktionen, \textit{Math. Z.} {\bf 37} (1933), no. 1, 356--380.

\end{thebibliography}
\end{document}